\documentclass[12pt,a4paper]{article}
\usepackage[T2A]{fontenc}
\usepackage[english]{babel}
\usepackage[utf8]{inputenc}

\usepackage{indentfirst}

\usepackage{orcidlink}

\setlength\textwidth{180mm} \setlength\textheight{259mm}
\hoffset=-21mm \voffset=-31mm


\usepackage[titles]{tocloft}

\usepackage{amsmath}
\usepackage{amsthm}
\usepackage{amsfonts}
\usepackage{amssymb}
\usepackage{MnSymbol,pifont,stmaryrd}

\usepackage{csquotes}
\usepackage[backend=biber,bibencoding=utf8, style=gost-numeric]{biblatex}
\addbibresource{Rankings.bib} 

\usepackage{hyperref}

\usepackage{tikz}
\usepackage{tkz-euclide}
 \usepackage[RGB]{xcolor}
    \definecolor{myred}{RGB}{255,46,46} 
    \definecolor{myblue}{RGB}{74,210,255} 
    \definecolor{mygreen}{RGB}{0,175,0} 
    \definecolor{mypurple}{RGB}{106,90,205} 
    \definecolor{mysand}{RGB}{253,216,53} 


\theoremstyle{plain}
\newtheorem{theo}{Theorem}[section]
\newtheorem*{theo*}{Theorem}
\newtheorem{lemm}[theo]{Lemma}
\newtheorem{corr}[theo]{Corollary}
\newtheorem{prop}[theo]{Proposition}

\theoremstyle{definition}
\newtheorem{deff}[theo]{Definition}

\newtheorem{nota}[theo]{Notation}

\newtheorem{exam}[theo]{Example}
\newtheorem{rema}[theo]{Remark}
\newtheorem{ques}[theo]{Question}

\newcommand{\la}{\langle}
\newcommand{\ra}{\rangle}
\newcommand{\df}{\quad{\colon}{\longleftrightarrow}\quad}

\newcommand{\tur}{\mathbb{T}}
\newcommand{\ST}{\mathsf{ST}}
\newcommand{\SV}{\mathsf{SV}}
\newcommand{\SE}{\mathsf{SE}}
\newcommand{\lr}{\mathsf{LR}}
\newcommand{\Sp}{\mathsf{Sp}}
\renewcommand{\d}{\mathsf{d^+}\hspace{-0.5pt}}
\renewcommand{\le}{\leqslant}
\renewcommand{\leq}{\leqslant}
\newcommand{\bw}{\mathsf{bw}}

\begin{document}
\title{Axiomatic and Erd\H{o}s--Moon approaches \\ to tournament rankings}

\author{Sergei Nokhrin\footnote{Krasovskii Institute of Mathematics and Mechanics of UB RAS, 620108, Yekaterinburg, Russia; \textit{e-mail}\textup{:} varyag2@mail.ru}, Mikhail Patrakeev\footnote{Krasovskii Institute of Mathematics and Mechanics of UB RAS, 620108, Yekaterinburg, Russia; \textit{e-mail}\textup{:} p17533@gmail.com; \href{https://orcid.org/0000-0001-7654-5208}{orcid.org/0000-0001-7654-5208}}\hspace{2mm}\orcidlink{0000-0001-7654-5208} 
}

\maketitle

\section*{Abstract}

\vspace{4mm}

\vspace{4mm}

Tournament \textit{ranking} is a function that assigns each vertex of a tournament (i.e., a directed graph without loops, in which each pair of different vertexes is connected by exactly one arc) a number called the  \textit{rank} of the vertex. One of approaches to constructing tournament rankings suggests choosing a ranking that satisfies a fixed set of axioms. In another approach, proposed by Erd\H{o}s and Moon, only injective rankings are considered, and among them, one that minimises the number of \textit{backward} arcs is selected (an arc ${x}\,{\to}\,{y}$ is called backward iff the rank of $x$ is less than the rank of $y$). We combine these two approaches as follows: among the rankings that satisfy a fixed set of axioms, we choose one that minimises the number of backward arcs. 

The Erd\H{o}s–Moon approach naturally leads to the question of how small the proportion of backward arcs can be guaranteed when using injective rankings. Erd\H{o}s and Moon showed that the answer to this question is 1/2. A similar question arises in our approach: how small the proportion of backward arcs can be guaranteed when using rankings that satisfy a set of axioms $\mathcal{A}$? We call this number the \textit{Erd\H{o}s-Moon number} of $\mathcal{A}$. We prove that the Erd\H{o}s–Moon number of the \textit{Copeland} axiom equals 3/4.
\vspace{4mm}

\textbf{Keywords:} The linear ordering problem, paired comparisons analysis, tournament ranking, Er\-d\H{o}s–Mo\-on number, Copeland fair ranking, spectral fair ranking, linear fair ranking

\section{Introduction}

How can we choose the ``best'' from a set of objects if about every two different objects of this set we know which of them is ``better'’? For example, the objects could be players in a single round-robin sports tournament with no draws, and then we need to find a way to select the winners of such a tournament. One standard method is to rank the objects (i.e., assign each object a value called its \textit{rank}) and then take the objects with the highest ranks. For example, the number of wins in a tournament is often used as the rank of players.

Usually, the information that ${x}$ is better than ${y}$ is encoded by the presence of the arc ${x}\,{\to}\,{y}$. Thus, all information is represented in the form of a finite tournament (i.e., a directed graph without loops, in which each pair of different vertexes is connected by exactly one arc), whose vertexes represent the objects of our interest. In the sports tournament example, the vertexes are the players, and the arc ${x}\,{\to}\,{y}$ corresponds to the match between ${x}$ and ${y}$, in which ${x}$ won. A tournament \textit{ranking} is a function from the set of vertexes to an arbitrary totally ordered set.

There are various approaches to constructing a ranking of a directed graph. One approach is as follows. 
First, a certain (task-dependent) set of conditions is fixed, which the ranking must satisfy; we call such conditions \textit{fairness axioms}. Then, an arbitrary ranking that satisfies this set of conditions is taken. (For example, in terms of a sports tournament, it is reasonable to require that if player ${x}$ defeated all the players that player ${y}$ defeated, and also defeated someone else, then the rank of ${x}$ should be higher than the rank of ${y}$ --- we call such a condition the \textit{weak} fairness axiom.) This approach is presented, for example, in~\cite{Tennenholtz} and~\cite{Rubinstein}. In~\cite{gonzalez2014paired}, a large number of different fairness axioms are presented and it is shown which of the known ranking methods satisfy each of these axioms. If $\mathcal{A}$ is a set of fairness axioms, then an \textit{$\mathcal{A}$-fair} ranking is a ranking that satisfies the axioms from $\mathcal{A}$.

Another approach was proposed in an article by Erd\H{o}s and Moon~\cite{erdos1965sets}. Their paper considers only injective rankings, i.e., those in which different vertexes have different ranks\footnote{In fact, Erd\H{o}s and Moon construct not a ranking, but a linear order on a set of vertexes. However, a linear order naturally corresponds to an injective ranking.}. Following this approach, among all injective rankings of the tournament, we need to choose one that minimises the number of \textit{backward} arcs (an arc ${x}\,{\to}\,{y}$ is called backward iff the rank of ${x}$ is less than the rank of ${y}$)\footnote{The number of backward arcs is a special case of the Kendall distance~\cite{kendall1990rank}}. In the case of a single round-robin sports tournament, a backward arc corresponds to a match in which the player with the lower rank won. Thus, each backward arc in a sense indicates a flaw in the ranking, and therefore it is desirable to minimise the number of backward arcs. Finding an injective ranking with a minimum number of backward arcs is an NP-complete problem~\cite{charon2010updated}. The problem of searching for good algorithms that allow to obtain injective rankings with a small number of backward arcs is studied within the framework of the linear ordering problem~\cite{marti2011linear}.

It seems natural to try to combine the axiomatic approach with the Erd\H{o}s–Moon approach. That is, among all injective $\mathcal{A}$-fair rankings, choose one that minimises the number of backward arcs. Unfortunately, this method of constructing a ranking is not applicable to some tournaments. For example, for virtually every known set $\mathcal{A}$ of axioms, the tournament on three vertexes ${x}$, ${y}$, and ${z}$  with arcs ${x}\,{\to}\,{y}$, ${y}\,{\to}\,{z}$, and ${z}\,{\to}\,{x}$ has no injective $\mathcal{A}$-fair ranking due to symmetry.

In the Erd\H{o}s–Moon approach, the injectivity condition is essential: without it, the optimal ranking with respect to the number of backward arcs would be one that assigns the same rank to all vertexes. However, such a ranking does not bring us any closer to solving the original problem of selecting the best objects. That is, the injectivity condition is needed, in particular, to weed out obviously bad rankings. But fairness axioms also weed out bad rankings, so in their presence, the injectivity condition can be dispensed with, and then the two approaches under discussion can be easily combined.

\medskip
We propose combining the Erd\H{o}s–Moon approach with the axiomatic approach as follows: \textit{To obtain a tournament ranking, one should choose, among all rankings that satisfy a fixed set of fairness axioms, one that minimises the number of backward arcs.}
\medskip

In this paper, we consider several fairness axioms. In terms of single round-robin sports tournament, these axioms can be described as follows. The out-degree of a player (i.e., a vertex) ${x}$ is the number of matches in which ${x}$ won. The \textit{strict Copeland} fairness axiom\footnote{The tournament ranking defined by the out-degrees of its vertexes was studied by Copeland~\cite{copeland1951reasonable}.}, Definition~\ref{deff.ax.cop}, postulates that if the out-degree of ${x}$ is greater than the out-degree of ${y}$, then the rank of ${x}$ is greater than the rank of ${y}$. The \textit{Copeland} fairness axiom adds to the previous axiom the condition that if the out-degrees of the players are equal, then their ranks are also equal. The \textit{weak} fairness axiom, Definition~\ref{deff.ax.weak}, has already been described above. The \textit{linear} fairness axiom, Definition~\ref{deff.ax.lin}, states that a player's rank should be higher the greater the sum of the ranks of those he has defeated. The idea behind the \textit{spectral} fairness axiom is that the list of ranks of those defeated by a particular player can be viewed as the ``spectrum'' of that player. A partial order relation can be defined on the set of spectra as follows: if there exists a non-rank-decreasing injection from spectrum ${S}_1$ to spectrum ${S}_2$, then ${S}_1\leqslant{S}_2$. The spectral fairness axiom, Definition~\ref{deff.ax.spec}, postulates that the rank of a player must be higher the larger his spectrum.

For our approach to be applicable to all (finite) tournaments, each tournament must have a ranking that satisfies the chosen set of axioms. In the case of the weak fairness axiom, as well as in the case of both Copeland fairness axioms, each tournament has a corresponding fair ranking: such a ranking is the function that maps a vertex to its out-degree. In~\cite{Tennenholtz}, it is proven that each tournament has a spectral fair ranking. We prove, see Theorem~\ref{teor.linear_fair}, that every tournament has a linear fair ranking (and therefore a spectral fair one, because spectral fairness follows from linear fairness).

To prove Theorem~\ref{teor.linear_fair}, we developed a method that allows us to find an $\mathcal{A}$-fair ranking of a tournament ${T}$ for certain sets $\mathcal{A}$ of fairness axioms. The idea behind this method is as follows. Let $\mathsf{SMP}({T})$ denote the set of \textit{simplicial} rankings of tournament ${T}$, i.e., rankings in which the ranks of the vertexes belong to the closed interval $[0,1]$ and their sum is equal to 1. We call a mapping from $\mathsf{SMP}({T})$ to $\mathsf{SMP}({T})$ a \textit{recalculation} after tournament ${T}$. The set $\mathsf{SMP}({T})$ is a simplex, so every continuous recalculation has a fixed point. We define the $\mathcal{A}$-fairness condition for recalculations in such a way that if a recalculation is $\mathcal{A}$-fair, then each of its fixed points is an $\mathcal{A}$-fair ranking of the given tournament, see Section~\ref{sect.fixed.point}.

The method we propose for combining the two approaches is a modification of the Erd\H{o}s–Moon approach, in which the injectivity condition is replaced by the $\mathcal{A}$-fairness condition. The Erd\H{o}s–Moon approach raises the natural question of how small the proportion of backward arcs can be guaranteed when using injective rankings. Erd\H{o}s and Moon showed~\cite{erdos1965sets} that the answer to this question is 1/2. For each set of axioms $\mathcal{A}$, a similar question arises within our approach: how small the proportion of backward arcs can be guaranteed when using $\mathcal{A}$-fair rankings? Formally, these questions can be formulated as follows:

Let $\bw({r},{T})$ be the proportion of backward arcs in ranking ${r}$ of tournament ${T}$, i.e., the ratio of the number of backward arcs to the number of all arcs. Let $\mathcal{A}({T})$ denote the class of those rankings of tournament ${T}$ that satisfy the set of axioms~$\mathcal{A}$. Let $\tur$ denote the class of finite tournaments. We are interested in the real number
\[
\mathsf{EMN}(\mathcal{A})\coloneq
\sup\limits_{{T}\in\tur}\min\limits_{{r}\in\mathcal{A}({T})}\bw({r},{T}).
\]
We call the number $\mathsf{EMN}(\mathcal{A})$ the \textit{Erd\H{o}s–Moon number} of~$\mathcal{A}$. Using these notations, the above-mentioned result of Erd\H{o}s and Moon can be formulated as follows:
\[
\sup\limits_{{T}\in\tur}\min\limits_{{r}\in\mathsf{Inj}({T})}\bw({r},{T})=1/2,
\]
where $\mathsf{Inj}({T})$ is the class of injective rankings of tournament ${T}$. In other words, the Erd\H{o}s–Moon number of the \textit{injectivity} axiom (which states that the ranks of different vertexes are different) is equal to 1/2.

We have found the Erd\H{o}s–Moon number of the strict Copeland axiom and that of the Copeland axiom, see Theorem~\ref{teor.EN(Cop,sCop)=3/4}; both of these numbers are equal to 3/4. The Erd\H{o}s–Moon numbers of the weak fairness axiom, the spectral fairness axiom, and the linear fairness axiom are currently unknown.

\section{Notation, definitions and results}\label{sect.notions.defs.results}

Recall that a \emph{tournament} $\la{V},{E}\ra$ is a directed graph (which means that ${E}\subseteq{V}{\times}{V}\setminus\{\la{x},{x}\ra:{x}\in{V}\}$) such that 
$\la{x},{y}\ra\in{E}\leftrightarrow\la{y},{x}\ra\nin{E}$ for all vertexes ${x}\neq{y}\in{V}$; we assume that all tournaments in this paper are finite. We denote by $\tur$ the class of finite tournaments. 
A \emph{ranking} of a tournament $\la{V},{E}\ra$ is a function from ${V}$ to some totally ordered set.

\begin{nota} 
  Let ${T}=\la{V},{E}\ra$ be a tournament, ${x},{y}\in{V}$, and ${r}$ is a ranking of ${T}$.
  \begin{itemize}
    \item [\ding{46}\,] ${x}\,{\to}\,{y}\ $ means that $\la{x},{y}\ra\in{E}$;
    \item [\ding{46}\,] ${x}^{+}={x}^{+}_{T}\coloneq\{{y}\in{V}:{x}\,{\to}\,{y}\}$;
        \item [\ding{46}\,] the \textit{out-degree} of vertex ${x}$, $\d({x})=\d({x},{T})$, is the cardinality of the set ${x}^{+}$;
    \item [\ding{46}\,] an arc $\la{x},{y}\ra$ is a \emph{backward} arc iff ${r}({x})<{r}({y})$;
    \item [\ding{46}\,] $\mathsf{E}({T})\coloneq$ the set of arcs of tournament ${T}$;
    \item [\ding{46}\,] $\overleftarrow{\mathsf{E}}({r},{T})\coloneq$ the set of backward arcs in ranking ${r}$ of tournament ${T}$;
    \item [\ding{46}\,] $\displaystyle\bw({r},{T})\coloneq\frac{|\overleftarrow{\mathsf{E}}({r},{T})|}{|\mathsf{E}({T})|}$ is the proportion of backward arcs in ranking ${r}$ of tournament ${T}$.
  \end{itemize}
\end{nota}

\begin{deff}\label{deff.EN}
 Suppose that for every tournament ${T}$, $\mathcal{A}({T})$ is a nonempty class of rankings of ${T}$.
 Then the \emph{Erd\H{o}s-Moon number} of $\mathcal{A}$ is defined as follows:
\[
\mathsf{EMN}(\mathcal{A})\coloneq
\sup\limits_{{T}\in\tur}\min\limits_{{r}\in\mathcal{A}({T})}\bw({r},{T}).
\]
\end{deff}
\noindent 
Note that the minimum of the set  
$\{\bw({r},{T}):{r}\in\mathcal{A}({T})\}$ is attained because $\bw({r},{T})$ is a fraction whose numerator a natural number and whose denominator is a positive constant.

\begin{deff}\label{deff.ax.cop}
  Let ${r}$ be a ranking of a tournament ${T}=\la{V},{E}\ra$. We say that ${r}$ is 
  \begin{itemize}
    \item [\ding{46}\,] \emph{non-strict Copeland fair} iff $\d({x})\leq\d({y})$ implies ${r}({x})\leq{r}({y})$
    for all ${x},{y}\in{V}$;
    \item [\ding{46}\,] \emph{strict Copeland fair} iff $\d({x})<\d({y})$ implies ${r}({x})<{r}({y})$
    for all ${x},{y}\in{V}$;
    \item [\ding{46}\,] \emph{Copeland fair} iff ${r}$ is both non-strict and strict Copeland fair;
    \item [\ding{46}\,] $\mathsf{nsCop}({T})$, $\mathsf{sCop}({T})$, and $\mathsf{Cop}({T})$ are the classes of non-strict Copeland fair, strict Copeland fair, and Copeland fair rankings of ${T}$, respectively.
\end{itemize}
\end{deff}

The following two examples show that neither strict nor non-strict Copeland fairness implies the other:

\begin{exam}
Consider an arbitrary tournament such that $\d({a})\neq\d({b})$ for some vertexes ${a},{b}$ and its ranking ${r}$ such that ${r}({x})={r}({y})$ for all vertexes ${x},{y}$. Such a tournament is  non-strict Copeland fair and is not strict Copeland fair.
\end{exam}

\begin{exam}
Consider an arbitrary tournament such that $\d({x})=\d({y})$ for all vertexes ${x},{y}$ and its ranking ${r}$ such that ${r}({a})\neq{r}({b})$ for some vertexes ${a},{b}$. Such a tournament is strict Copeland fair and is not non-strict Copeland fair.
\end{exam}

The classes $\mathsf{nsCop}({T})$, $\mathsf{sCop}({T})$, and $\mathsf{Cop}({T})$ are nonempty for every tournament ${T}$  (consider ${r}\coloneq\mathsf{d}^{+}$), so the Erd\H{o}s-Moon numbers of these classes are correctly defined. In Section~\ref{section.copeland} we prove that $\mathsf{EMN}(\mathsf{Cop})=\mathsf{EMN}(\mathsf{sCop})=3/4$, see Theorem~\ref{teor.EN(Cop,sCop)=3/4}. Also we have $\mathsf{EMN}(\mathsf{nsCop})=0$ because the trivial ranking ${r}\equiv{0}$ is non-strict Copeland fair.

\begin{deff}\label{deff.ax.weak}
  Let ${r}$ be a ranking of a tournament $\la{V},{E}\ra$. We say that ${r}$ is
  \begin{itemize}
    \item [\ding{46}\,] \emph{weakly fair} \  iff \   ${x}^{+}\subset{y}^{+}$ implies ${r}({x})<{r}({y})$
    for all ${x},{y}\in{V}$;
    \item [\ding{46}\,] $\mathsf{Weak}({T})$ is the class of weak fair rankings of ${T}$.
\end{itemize}
\end{deff}
\noindent 
Note that ${x}^{+}\subseteq{y}^{+}$ implies ${x}^{+}\subset{y}^{+}$ because ${y}\,{\to}\,{x}$ in this case.

\begin{rema}
    $\mathsf{Cop}({T})\subseteq\mathsf{sCop}({T})\subseteq\mathsf{Weak}({T})$ for every tournament ${T}$.\hfill\qed
\end{rema}

It follows that $\mathsf{Weak}({T})\neq\varnothing$ for every tournament ${T}$, so the Erd\H{o}s-Moon number of the class $\mathsf{Weak}$ is correctly defined. 

\begin{ques}\label{quest.weak}
    Find the value of $\mathsf{EMN}(\mathsf{Weak})$.
\end{ques}

\begin{deff}
Let ${r}$ be a ranking of a tournament $T=\la{V},{E}\ra$ and  ${x},\,{y}\in {V}$. Then:
\begin{itemize}
    \item [\ding{46}\,] ${x}\leqslant_{r}{y}\df$ there exists an injection ${f}\colon{x}^{+}\to {y}^{+}$ such that ${r}({z})\le {r}\big({f(z)}\big)$   for all ${z}\in {x}^{+}$;
    \item [\ding{46}\,] ${x}<_{r}{y}\df{x}\leqslant_{r}{y}$ and  ${y}\nleqslant_{r}{x}$.
\end{itemize}
\end{deff}
\noindent 
Note that $\leqslant_{r}$ is a preorder (that is, a reflexive transitive relation) on the set ${V}$.

\begin{deff}\label{deff.ax.spec}
  Let ${r}$ be a ranking of tournament $\la{V},{E}\ra$. We say that ${r}$ is
  \begin{itemize}
    \item [\ding{46}\,] \emph{non-strict spectral fair} \  iff \   ${x}\leqslant_{r}{y}$ implies ${r}({x})\leq{r}({y})$
    for all ${x},{y}\in{V}$;
    \item [\ding{46}\,] \emph{strict spectral fair} \   iff \   ${x}<_{r}{y}$ implies ${r}({x})<{r}({y})$
    for all ${x},{y}\in{V}$;
    \item [\ding{46}\,] \emph{spectral fair} \   iff \   ${r}$ is both non-strict and strict spectral fair;
    \item [\ding{46}\,] $\mathsf{Spec}({T})$ is the class of spectral fair rankings of ${T}$.
\end{itemize}
\end{deff}

\begin{deff}\label{deff.ax.lin}
  A {\it positive} ranking of a tournament $\la{V},{E}\ra$ is a function from ${V}$ to the set of positive reals with the natural order.   
  We say that a positive ranking ${r}$ is 
  \begin{itemize}
    \item [\ding{46}\,] \emph{non-strict linear fair}\quad iff\quad 
    $\sum\limits_{{z}\in{x}^+}{r}({z})\leq\sum\limits_{{z}\in{y}^+}{r}({z})$ 
    implies ${r}({x})\leq{r}({y})$
    for all ${x},{y}\in{V}$;
    \item [\ding{46}\,] \emph{strict linear fair}\quad iff\quad 
    $\sum\limits_{{z}\in{x}^+}{r}({z})<\sum\limits_{{z}\in{y}^+}{r}({z})$ 
    implies ${r}({x})<{r}({y})$
    for all ${x},{y}\in{V}$;
    \item [\ding{46}\,] \emph{linear fair}\quad iff\quad ${r}$ is both non-strict and strict linear fair;
    \item [\ding{46}\,] $\mathsf{Lin}({T})$ is the class of linear fair rankings of ${T}$.
\end{itemize}
\end{deff}

Note that if we replace the positiveness condition in the definition of linear fair ranking with the non-negativeness condition, then linear fairness would not imply spectral fairness (consider a tournament with two vertexes whose ranks are~0).

\begin{prop}\label{prop.Lin.Spec.Weak}
    $\mathsf{Lin}({T})\subseteq\mathsf{Spec}({T})\subseteq\mathsf{Weak}({T})$ for every tournament ${T}$.\hfill\qed
\end{prop}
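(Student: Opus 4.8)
The plan is to prove the two inclusions separately, treating $\mathsf{Lin}(T)\subseteq\mathsf{Spec}(T)$ first and $\mathsf{Spec}(T)\subseteq\mathsf{Weak}(T)$ second. For the first inclusion, fix a linear fair (hence positive) ranking $r$ and suppose $x\le_r y$, witnessed by an injection $f\colon x^+\to y^+$ with $r(z)\le r(f(z))$ for all $z\in x^+$. I would first establish the sum comparison
\[
\sum_{z\in x^+}r(z)\;\le\;\sum_{z\in x^+}r(f(z))\;\le\;\sum_{w\in y^+}r(w),
\]
where the left inequality is termwise and the right one holds because $f(x^+)\subseteq y^+$ and the omitted terms $r(w)$ with $w\in y^+\setminus f(x^+)$ are positive. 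Non-strict linear fairness then yields $r(x)\le r(y)$, which is exactly non-strict spectral fairness.

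For the strict half, suppose $x<_r y$, i.e.\ $x\le_r y$ and $y\nleqslant_r x$. By the displayed chain it suffices to show the outer sum inequality is strict, for then strict linear fairness gives $r(x)<r(y)$. I would argue by contradiction: if the two outer sums were equal, both inequalities above would collapse to equalities. Equality on the left forces $r(z)=r(f(z))$ for every $z\in x^+$ (each term already satisfies $\le$); equality on the right forces $\sum_{w\in y^+\setminus f(x^+)}r(w)=0$, and positivity then forces $y^+\setminus f(x^+)=\varnothing$, so that $f$ is a rank-preserving bijection of $x^+$ onto $y^+$. Its inverse $f^{-1}$ is then a witness for $y\le_r x$, contradicting $y\nleqslant_r x$. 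This completes $\mathsf{Lin}(T)\subseteq\mathsf{Spec}(T)$.

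I expect this strict step to be the main obstacle: the passage from a strict $<_r$ to a \emph{strict} sum inequality is precisely where positivity of the ranking is indispensable. The remark preceding the proposition already signals this, since with merely non-negative ranks the implication fails—zero-rank vertices in $y^+\setminus f(x^+)$ would keep the sums equal without forcing $y\le_r x$. I would therefore make sure the contradiction argument invokes positivity explicitly at the step $\sum_{w\in y^+\setminus f(x^+)}r(w)=0\implies y^+\setminus f(x^+)=\varnothing$.

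For the second inclusion, fix a spectral fair $r$ and suppose $x^+\subset y^+$ with the inclusion proper. The inclusion map $x^+\hookrightarrow y^+$ trivially witnesses $x\le_r y$. Moreover $y\nleqslant_r x$, since any witness would be an injection $y^+\to x^+$, which is impossible because $|y^+|>|x^+|$. Hence $x<_r y$, and strict spectral fairness delivers $r(x)<r(y)$, which is weak fairness. This second inclusion is routine once the cardinality obstruction to $y\le_r x$ is noted, so the weight of the proof lies in the strict case of the first inclusion described above.
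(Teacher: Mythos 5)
Your proposal is correct. The paper states this proposition without proof (the terminal \qed signals it is taken as routine), and your argument supplies exactly the verification the authors evidently had in mind: the termwise-plus-positivity sum comparison for $\mathsf{Lin}({T})\subseteq\mathsf{Spec}({T})$, with positivity invoked at precisely the step where it is indispensable (forcing $y^{+}\setminus f(x^{+})=\varnothing$ so that $f^{-1}$ witnesses $y\leqslant_{r}x$), and the cardinality obstruction to an injection $y^{+}\to x^{+}$ for $\mathsf{Spec}({T})\subseteq\mathsf{Weak}({T})$.
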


In Section~\ref{sect.fixed.point} we show that $\mathsf{Lin}({T})\neq\varnothing$ for every tournament ${T}$, see Theorem~\ref{teor.linear_fair}, so the Erd\H{o}s-Moon numbers of the classes $\mathsf{Lin}$ and $\mathsf{Spec}$ are also correctly defined. Therefore we may ask:

\begin{ques}\label{quest.Lin.Spec}
    Find the values of $\mathsf{EMN}(\mathsf{Lin})$ and $\mathsf{EMN}(\mathsf{Spec})$.
\end{ques}

\section{A fixed point of a fair recalculation}\label{sect.fixed.point}

In this section we prove that every tournament possesses a linear fair ranking and a spectral fair ranking, see Theorem~\ref{teor.linear_fair}. For this purpose, we have developed a general method that allows us to find a fair ranking for different notions of fairness. We need the following terminology.

\begin{deff}\label{deff.recalculation}
  A {\it simplicial ranking of tournament} $\la{V},{E}\ra$ is a function from ${V}$ to the closed interval $[0,1]$ such that $\sum\limits_{{x}\in{V}}{r}({x})=1$. 
  We denote by $\mathsf{SMP}({T})$ the set of simplicial rankings of tournament ${T}$. 
  A {\it recalculation} after tournament ${T}$ is a function from $\mathsf{SMP}({T})$ to $\mathsf{SMP}({T})$. 
  For a simplicial ranking ${r}$ and a recalculation $\varphi$, we denote the ranking $\varphi({r})$ by ${r}_\varphi$.   
\end{deff}

If we think in terms of a single round-robin sports tournament, then the numbers $\la{r}({x}):{x}\in{V}\ra$ can be viewed as the ratings of players ${x}\in{V}$ before tournament ${T}$, and the numbers $\la{r}_\varphi({x}):{x}\in{V}\ra$ can be viewed as the new ratings of the same players, recalculated based on the results of the tournament. Similar rating recalculation functions are used, for example, in chess. Of particular interest are those functions $\varphi$ for which the player's new rating is higher the more players he defeated and the higher their ratings were.

Our goal is to construct an $\mathcal{A}$-fair tournament ranking. The method in this section is applicable to those axioms $\mathcal{A}$ that can be reformulated in the form 
\begin{equation}
\label{implication}
{P}({r},{x},{y})\Rightarrow{Q}({r},{x},{y})\quad\text{for all }{x},{y}\in{V},    
\end{equation}
where ${P}({r},{x},{y})$ and ${Q}({r},{x},{y})$ are formulas which say something about ranking ${r}$ and vertexes ${x}$ and ${y}$. For example, the strict spectral fairness axiom
\[
{x}<_{r}{y}\Rightarrow{r}({x})<{r}({y})\quad\text{for all }{x},{y}\in{V}
\]
has this form. For an axiom $\mathcal{A}$ that is equivalent to (\ref{implication}), 
we say that a recalculation $\varphi$ after tournament ${T}$ is \textit{$\mathcal{A}$-fair} iff 
\[
{P}({r},{x},{y})\Rightarrow{Q}({r}_\varphi,{x},{y})\quad\text{for all }{r}\in\mathsf{SMP}({T})\text{ and all }{x},{y}\in{V}.
\]
For example, $\varphi$ is \textit{strict spectral fair} iff
\[
{x}<_{r}{y}\Rightarrow{r}_\varphi({x})<{r}_\varphi({y})\quad\text{for all }{r}\in\mathsf{SMP}({T})\text{ and all }{x},{y}\in{V}.
\]

Now, if $\varphi$ is an $\mathcal{A}$-fair recalculation after ${T}$ and a ranking ${r}$ is its fixed point (that is, ${r}_\varphi={r}$), then condition (\ref{implication}) holds, which means that ${r}$ is an $\mathcal{A}$-fair ranking of~${T}$.


\begin{deff}\label{recal_cont}\mbox{}
Let ${T}=\la{V},{E}\ra$ be a tournament. A recalculation ${\varphi}:\,\mathsf{SMP}({T})\,\to\,\mathsf{SMP}({T})$ is \textit{continuous} (\textit{contractive}) iff it is a continuous (contractive) mapping with respect to the metric 
\[{d}({r},{r}^\prime)\coloneq\max\limits_{{x}\in{V}}|{r}({x})-{r}^\prime({x})|.
\]
\end{deff}

If we view a ranking ${r}\in\mathsf{SMP}({T})$ as a point with coordinates $\la{r}({x})\ra_{{x}\in{V}}$, then the set $\mathsf{SMP}({T})$ is a $(|{V}|-1)$-dimensional simplex, so the Brouwer fixed-point theorem and the Banach fixed-point theorem say that 

\begin{rema}\label{fixed_point}\mbox{}
Every continuous (contractive) recalculation has a fixed point.\hfill\qed
\end{rema}

\begin{deff}\label{deff.recal.lin.fair}
  We say that a recalculation $\varphi$ after a tournament ${T}$ is
    \begin{itemize}
    \item [\ding{46}\,] \emph{non-strict linear fair} \ iff 
    
    $\sum\limits_{{z}\in{x}^+}{r}({z})\leq\sum\limits_{{z}\in{y}^+}{r}({z})$ 
    implies ${r}_\varphi({x})\leq{r}_\varphi({y})$
    for all ${r}\in\mathsf{SMP}({T})$ and all ${x},{y}\in{V}$;
    \item [\ding{46}\,] \emph{strict linear fair} \ iff 
    
    $\sum\limits_{{z}\in{x}^+}{r}({z})<\sum\limits_{{z}\in{y}^+}{r}({z})$ 
    implies ${r}_\varphi({x})<{r}_\varphi({y})$
    for all ${r}\in\mathsf{SMP}({T})$ and all ${x},{y}\in{V}$;
    \item [\ding{46}\,] \emph{linear fair} \ iff \ ${r}$ is both non-strict and strict linear fair.
  \end{itemize}
\end{deff}

\begin{lemm}\label{lemm.fixed.point}\mbox{}
If a fixed point of a linear fair recalculation is a positive ranking, then it is a linear fair ranking.
\hfill\qed
\end{lemm}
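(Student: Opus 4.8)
The plan is to read off the conclusion directly from the defining implications, exactly as anticipated in the paragraph preceding Definition~\ref{deff.recal.lin.fair}. The engine of the proof is that the antecedents in the definition of a \emph{linear fair recalculation} (Definition~\ref{deff.recal.lin.fair}) coincide with the antecedents in the definition of a \emph{linear fair ranking} (Definition~\ref{deff.ax.lin}), and the two consequents differ only in that the recalculation speaks about $r_\varphi$ while the ranking speaks about $r$; at a fixed point these two rankings are identical, so the implications collapse onto one another.

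Concretely, I would fix a linear fair recalculation $\varphi$ after a tournament ${T}=\la{V},{E}\ra$ and a fixed point ${r}\in\mathsf{SMP}({T})$, so that ${r}_\varphi={r}$, and assume moreover that ${r}$ is a positive ranking. To show ${r}\in\mathsf{Lin}({T})$ I must check that ${r}$ is both non-strict and strict linear fair. For the non-strict part, take ${x},{y}\in{V}$ with $\sum_{{z}\in{x}^+}{r}({z})\le\sum_{{z}\in{y}^+}{r}({z})$; since $\varphi$ is non-strict linear fair and ${r}\in\mathsf{SMP}({T})$, this yields ${r}_\varphi({x})\le{r}_\varphi({y})$, and substituting ${r}_\varphi={r}$ gives ${r}({x})\le{r}({y})$. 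The strict part is verbatim the same with $<$ in place of $\le$. Hence ${r}$ satisfies both defining implications of Definition~\ref{deff.ax.lin}.

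Finally, I would note why the positivity hypothesis is present and is the only nontrivial point: Definition~\ref{deff.ax.lin} declares linear fairness only for \emph{positive} rankings, whereas an arbitrary fixed point lives in $\mathsf{SMP}({T})$ and may assign the value $0$ to some vertexes, so without positivity the phrase ``linear fair ranking'' would not even apply to ${r}$. The assumption that ${r}$ is positive closes exactly this gap, turning ${r}$ into a bona fide positive ranking that satisfies both linear fairness implications, i.e.\ a linear fair ranking. I do not expect any genuine obstacle here: the statement is a direct transfer of the fairness implication from $\varphi$ to its fixed point, and the whole argument is essentially the one-line substitution ${r}_\varphi={r}$ once the matching of the antecedents is observed.
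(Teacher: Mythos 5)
Your argument is correct and is exactly the one the paper intends: the lemma is stated with a \qed because it follows immediately from substituting ${r}_\varphi={r}$ into the defining implications of Definition~\ref{deff.recal.lin.fair}, as sketched in the paragraph preceding that definition, with positivity needed only so that Definition~\ref{deff.ax.lin} applies to ${r}$ at all. Nothing to add.
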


\begin{theo}\label{teor.linear_fair}\mbox{}
  \begin{itemize}
      \item [1. ] Every tournament possesses a linear fair ranking. 
      \item [2. ] Every tournament possesses a spectral fair ranking. 
  \end{itemize}
\end{theo}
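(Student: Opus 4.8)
The plan is to derive part 2 from part 1 and to prove part 1 by constructing a single continuous, linear fair recalculation whose fixed point is automatically positive; Remark~\ref{fixed_point} and Lemma~\ref{lemm.fixed.point} then finish everything. Part 2 will follow immediately, since Proposition~\ref{prop.Lin.Spec.Weak} gives $\mathsf{Lin}(T)\subseteq\mathsf{Spec}(T)$, so any linear fair ranking is in particular spectral fair.

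For part 1, I would fix $T=\la V,E\ra$ and, for $r\in\mathsf{SMP}(T)$ and $x\in V$, abbreviate the score $s_r(x)\coloneq\sum_{z\in x^+}r(z)$; since the ranks lie in $[0,1]$ and sum to $1$, we have $s_r(x)\in[0,1]$. I would then pick a strictly increasing continuous $g\colon[0,1]\to(0,\infty)$ — for concreteness $g(t)=e^{t}$ — and define the recalculation $\varphi$ by
\[
r_\varphi(x)\coloneq\frac{g\big(s_r(x)\big)}{\sum_{y\in V}g\big(s_r(y)\big)}.
\]
The denominator is a positive constant (a finite sum of strictly positive terms), so each value $r_\varphi(x)$ lies in $(0,1)$ and the values sum to $1$; thus $\varphi$ maps $\mathsf{SMP}(T)$ into its strictly positive part. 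Since $s_r(x)$ depends linearly, hence continuously, on $r$, since $g$ is continuous, and since the common denominator is continuous and bounded away from $0$, the map $\varphi$ is continuous in the metric of Definition~\ref{recal_cont}.

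Next I would verify fairness directly. If $s_r(x)\le s_r(y)$, then monotonicity of $g$ gives $g(s_r(x))\le g(s_r(y))$, and dividing by the common positive denominator yields $r_\varphi(x)\le r_\varphi(y)$; hence $\varphi$ is non-strict linear fair. Replacing both $\le$ by $<$ and invoking the strict monotonicity of $g$ gives strict linear fairness, so $\varphi$ is linear fair. By Remark~\ref{fixed_point} the continuous recalculation $\varphi$ has a fixed point $r^{*}$, and because every value of $\varphi$ is strictly positive, $r^{*}=\varphi(r^{*})$ is a positive ranking. Lemma~\ref{lemm.fixed.point} then guarantees that $r^{*}$ is a linear fair ranking, which proves part 1.

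The one delicate point, and the place I would be most careful, is the interplay between positivity and strictness. The definition of linear fairness and Lemma~\ref{lemm.fixed.point} both speak only of \emph{positive} rankings, so the recalculation must land in the strictly positive interior of the simplex; this is precisely why one needs $g>0$ rather than merely $g\ge 0$ — for example a vertex with empty out-set has score $0$ and would be assigned rank $0$ under the naive choice $g(t)=t$, destroying positivity. The strict half of linear fairness likewise forces $g$ to be strictly, not merely weakly, increasing. Once these two constraints on $g$ are respected, the remaining checks are routine.
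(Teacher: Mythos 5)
Your proof is correct, but it takes a genuinely different route from the paper's. The paper splits into two cases: for a strongly connected tournament it uses the ``eigenvector'' recalculation ${r}_\varphi({x})={\lambda}_{r}^{-1}\sum_{{z}\in{x}^{+}}{r}({z})$, where strong connectivity is exactly what guarantees ${\lambda}_{r}\geq 1$ (so the map is well defined) and that any fixed point is positive; for a non-strongly-connected tournament it decomposes the vertex set into strongly connected components, ranks each component separately, and glues the rankings together with scaling constants ${\mu}_{i}$. You instead compose the score ${s}_{r}({x})$ with a strictly increasing, strictly positive continuous function ${g}$ before normalising, which makes the recalculation land in the interior of the simplex for \emph{every} tournament, so the case distinction and the component-gluing step disappear entirely; your observation that positivity of ${g}$ is what forces the fixed point to be a positive ranking (as required by Lemma~\ref{lemm.fixed.point}), and that strict monotonicity of ${g}$ is what delivers the strict half of linear fairness, is precisely the right accounting of where the hypotheses are used. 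The trade-off is informational: the paper's fixed point satisfies $\lambda\cdot{r}({x})=\sum_{{y}\in{x}^{+}}{r}({y})$ and is therefore a Perron--Frobenius eigenvector of the tournament matrix (Remark~\ref{ranking_with_equality}), a structural by-product your softmax-style equilibrium ${r}({x})\propto e^{{s}_{r}({x})}$ does not provide; in exchange your argument is shorter, uniform over all tournaments, and even sidesteps the degenerate normalisation ${\lambda}_{r}=0$ that the paper's formula would encounter on a one-vertex tournament.
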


\begin{proof}
Statement (2) follows from (1) and Proposition~\ref{prop.Lin.Spec.Weak}, let us prove (1). Recall that a directed graph is \textit{strongly connected} iff there is a path in each direction between each pair of vertexes of the graph. 

\textit{Case 1.} The tournament ${T}=\la{V},{E}\ra$ is strongly connected. 

Consider a recalculation ${\varphi}$ after tournament ${T}$ such that for every ${r}\in\mathsf{SMP}({T})$ and every ${x}\in{V}$, 
$$
{r}_{\varphi}({x})={\lambda}_{r}^{-1}\cdot\sum\limits_{{z}\in{x}^{+}} {r}({z}),\quad\text{where }\ {\lambda_{r}}=\sum\limits_{{x}\in{V}}\sum\limits_{{z}\in{x}^{+}} {r}({z}).
$$
Note that ${\lambda}_{r}\geqslant 1$ because ${r}\in \mathsf{SMP}({T})$ and ${T}$ is strongly connected. Note also that if ${r}$ is a fixed point of $\varphi$, then  ${r}$ is a positive ranking.

The recalculation ${\varphi}$ is continuous, therefore, by Remark~\ref{fixed_point}, it has a fixed point ${r}$. Also
${\varphi}$ is linear fair and ${r}$ is a positive ranking, so, by Lemma~\ref{lemm.fixed.point}, ${r}$ is a linear fair ranking of ${T}$.

\textit{Case 2.} The tournament ${T}$ is not strongly connected. 

Then ${V}$ can be partitioned into disjoint union of strongly connected components ${V}_{1},\ldots,{V}_{n}$ of ${T}$ in such a way that for all ${i}\neq{j}\in\{1,\ldots,{n}\}$,  ${x}\in{V}_{i}$, and ${y}\in{V}_{j}$, we have
\[
{x}\leftarrow{y}\ \ \Leftrightarrow\ \ {i}<{j}.
\]

Every subtournament $\la{V}_{i},{E}_{i}\ra$ of ${T}$ is strongly connected, so, according to Case 1, it has a linear fair positive ranking ${r}_{i}$. There are positive reals 
${\mu}_1,\,\ldots,\,{\mu}_{n}$ such that if ${x}\in{V}_{i}$, ${y}\in{V}_{j}$, and ${i}<{j}$, then
\[
{\mu}_{i}\cdot{r}_{i}({x})<{\mu}_{j}\cdot{r}_{j}({y}).
\]

It is straightforward to show that the ranking ${r}$ such that ${r}({x})={\mu}_{i}\cdot{r}_{i}(x)$ for ${x}\in{V}_{i}$ is a linear fair positive ranking of ${T}$.
\end{proof}

\begin{rema}\label{ranking_with_equality}\mbox{}
If a tournament $\la{V},{E}\ra$ is strongly connected, then the linear fair positive ranking ${r}$ from the above proof satisfies the equality 
\[
\lambda\cdot{r}({x})=\sum\limits_{{y}\in{x}^{+}}{r}({y})\quad\text{for all }\ {x}\in{V},
\]
where $\lambda\geqslant 1$. This means that vector $\la{r}(x)\ra_{{x}\in{V}}$ is an eigenvector of the tournament matrix (that is, the matrix $\la{m}_{{x},{y}}\ra_{{x},{y}\in{V}}$ such that ${m}_{{x},{y}}=1$ whenever ${y}\in{x}^{+}$ and ${m}_{{x},{y}}=0$ otherwise), all its coordinates are positive reals, and its eigenvalue is also a positive real (that is, this eigenvalue is the Perron–Frobenius eigenvalue).
\end{rema}

\section{The Erd\H{o}s-Moon number of axioms $\mathsf{Cop}$ and $\mathsf{sCop}$}
\label{section.copeland}

In this section, we prove that the Erd\H{o}s–Moon number of the strict Copeland axiom and that of the Copeland axiom are equal to 3/4.

\begin{prop}\label{prop.less}
If  ${r}$ is a strict Copeland fair ranking of a tournament ${T}$, then $\bw({r},{T})<{3}/{4}$.
\end{prop}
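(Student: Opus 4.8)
The plan is to reduce the statement to an extremal inequality about the sorted sequence of out-degrees. Throughout write $n=|{V}|$ and let $d_1\le\cdots\le d_n$ be the out-degrees of the vertexes listed in non-decreasing order, so that $\sum_{i}d_i=|\mathsf E({T})|=\binom n2$.

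First I would carry out the reduction in which strict Copeland fairness is used. Fix a strict Copeland fair ranking ${r}$ and list the vertexes as ${u}_1,\dots,{u}_n$ so that ${r}({u}_1)\le\cdots\le {r}({u}_n)$ (ties broken arbitrarily). Then the out-degrees are non-decreasing along this list: if ${i}<{j}$ but $\d({u}_i)>\d({u}_j)$, then $\d({u}_j)<\d({u}_i)$ would force ${r}({u}_j)<{r}({u}_i)$ by strict Copeland fairness, contradicting ${r}({u}_i)\le {r}({u}_j)$; hence $\d({u}_i)=d_i$. Moreover every backward arc $\la {u}_a,{u}_b\ra$ satisfies $a<b$, since ${r}({u}_a)<{r}({u}_b)$ rules out $a\ge b$. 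Thus the backward arcs are among the ``forward'' arcs of the list, and counting, for each ${u}_i$, its out-arcs that point to a later vertex gives
\[
|\overleftarrow{\mathsf E}({r},{T})|\ \le\ \sum_{i=1}^{n}\#\{\,b>i:\ {u}_i\to {u}_b\,\}\ \le\ \sum_{i=1}^{n}\min(d_i,\,n-i),
\]
because ${u}_i$ has only $d_i$ out-neighbours in total and only $n-i$ vertexes follow it.

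The main step is then to prove $\sum_{i=1}^{n}\min(d_i,n-i)<\tfrac34\binom n2$; this is where the constraint $\sum_i d_i=\binom n2$ and the monotonicity of $(d_i)$ enter. Put $a_i:=n-i$, so that $a_1>\cdots>a_n=0$ and $\sum_i a_i=\binom n2=\sum_i d_i$. Applying $\min(u,v)=\tfrac12(u+v-|u-v|)$ termwise yields
\[
\sum_{i=1}^{n}\min(d_i,n-i)=\binom n2-\tfrac12\sum_{i=1}^{n}|d_i-a_i|,
\]
so it suffices to show $\sum_i|d_i-a_i|>\tfrac12\binom n2$. Since $(d_i)$ is non-decreasing while $(a_i)$ is strictly decreasing with the same total, they are forced apart, and I would make this quantitative by pairing index $i$ with $n+1-i$. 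For $i\le n/2$ one has $d_i\le d_{n+1-i}$ while $a_i-a_{n+1-i}=(n-i)-(i-1)=n+1-2i>0$, so the triangle inequality gives
\[
|d_i-a_i|+|d_{n+1-i}-a_{n+1-i}|\ \ge\ \bigl|(d_i-d_{n+1-i})-(a_i-a_{n+1-i})\bigr|\ \ge\ n+1-2i.
\]
Summing over $i=1,\dots,\lfloor n/2\rfloor$ collapses the right-hand side to $\lfloor n^2/4\rfloor$, so $\sum_i|d_i-a_i|\ge\lfloor n^2/4\rfloor>\tfrac{n^2-n}4=\tfrac12\binom n2$ for every $n\ge2$. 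Substituting back gives $\sum_i\min(d_i,n-i)\le\binom n2-\tfrac12\lfloor n^2/4\rfloor<\tfrac34\binom n2$, whence $\bw({r},{T})<3/4$.

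The routine points I would keep an eye on are the tie-handling in the reduction (equal ranks force equal out-degrees, which is exactly what keeps $\d({u}_i)=d_i$) and the parity bookkeeping in the pairing (for odd $n$ the middle index contributes a harmless nonnegative term). The genuinely delicate point — the main obstacle — is the extremal inequality $\sum_i\min(d_i,n-i)<\tfrac34\binom n2$: the naive observation that a backward arc $\la x,y\ra$ forces $\d(x)\le\d(y)$ is by itself useless, since in a regular tournament it permits \emph{every} arc. What rescues the argument is that sorting by out-degree together with the fixed total $\binom n2$ prevents the forward arcs from occupying more than (asymptotically) three quarters of all arcs, the bound being approached only by near-regular tournaments that split into two equal transitive blocks.
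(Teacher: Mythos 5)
Your proof is correct. Its first half coincides with the paper's own reduction: you list the vertexes in non-decreasing order of rank, use strict Copeland fairness to conclude that the out-degrees along this list form the sorted sequence $d_1\le\cdots\le d_n$ (equal ranks forcing equal out-degrees is indeed a consequence of the axiom), and note that every backward arc points forward in the list, so it suffices to bound the forward-in-list arcs, i.e.\ the paper's set ${E}^{<}$. From there the two arguments genuinely diverge. The paper bounds $|{E}^{<}|$ by cutting the vertex set into a lower and an upper half, with separate even and odd cases: the arcs inside each half number exactly $\binom{l}{2}$, and monotonicity of the degree sums across the cut forces the lower-to-upper crossing arcs to be at most half of all crossing arcs (the inequality $\text{A}\le\text{B}$ in Figures~\ref{fig:even:1} and~\ref{fig:odd:1}). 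You instead insert the per-vertex bound $\#\{b>i:u_i\to u_b\}\le\min(d_i,n-i)$ and prove the purely numerical extremal statement that $\sum_i\min(d_i,n-i)<\tfrac34\binom n2$ for \emph{any} non-decreasing sequence with sum $\binom n2$, via the identity $\min(u,v)=\tfrac12(u+v-|u-v|)$ and the pairing $i\leftrightarrow n{+}1{-}i$, which yields $\sum_i|d_i-(n-i)|\ge\lfloor n^2/4\rfloor$; your bookkeeping there (disjointness of the pairs, the unpaired middle index for odd $n$, and $\lfloor n^2/4\rfloor>\tfrac12\binom n2$ for $n\ge2$) is all in order. The two routes produce identical intermediate bounds --- $\frac{l(3l-2)}{2}$ for $n=2l$ and $\frac{l(3l+1)}{2}$ for $n=2l+1$ --- so neither is sharper. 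What yours buys is a uniform treatment of both parities and a clean, tournament-free lemma about degree sequences (fairness enters only once, to sort the degrees); what the paper's buys is combinatorial transparency --- the block decomposition is exactly what its figures depict, and it isolates in one visible inequality the place where the axiom is used.
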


\begin{corr}\label{prop.grater}
$\mathsf{EMN}(\mathsf{Cop})\leq\frac{3}{4}\ $ and $\ \mathsf{EMN}(\mathsf{sCop})\leq\frac{3}{4}$.\hfill\qed
\end{corr}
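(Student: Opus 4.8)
The final statement (the Corollary) is immediate once one has the Proposition: for each tournament $T$ the class $\mathsf{sCop}(T)$ is nonempty (it contains $\d$), so $\min_{r\in\mathsf{sCop}(T)}\bw(r,T)\le\bw(\d,T)<3/4$, whence $\mathsf{EMN}(\mathsf{sCop})\le 3/4$; and since $\mathsf{Cop}(T)\subseteq\mathsf{sCop}(T)$, every Copeland fair ranking is strict Copeland fair and hence also has $\bw<3/4$, giving $\mathsf{EMN}(\mathsf{Cop})\le 3/4$. So the real content is Proposition~\ref{prop.less}, and that is what I would prove.

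The plan is to reduce everything to the degree-sorted rank order. Writing $n=|V|$ and $|E|=\binom n2$, I would list $V$ as $v_1,\dots,v_n$ by nondecreasing rank (ties broken arbitrarily). Strict Copeland fairness forces the degrees to be nondecreasing along this order: if $i<j$ but $\d(v_i)>\d(v_j)$, then $\d(v_j)<\d(v_i)$ would give $r(v_j)<r(v_i)$, contradicting $r(v_i)\le r(v_j)$. Set $d_i:=\d(v_i)$, so $d_1\le\cdots\le d_n$. The key reduction is that a backward arc $v_i\to v_j$ has $r(v_i)<r(v_j)$, hence $i<j$, so \emph{every} backward arc points forward in position; thus $|\overleftarrow{\mathsf{E}}(r,T)|\le F$, where $F$ counts all arcs $v_i\to v_j$ with $i<j$.

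Next I would bound $F$ purely in terms of the degree sequence. For each $i$ let $f_i$ be the number of out-neighbours of $v_i$ among $v_{i+1},\dots,v_n$, so $F=\sum_i f_i$. Since $f_i\le d_i$ and $f_i\le n-i$, and using $\min(a,b)=\tfrac12\bigl(a+b-|a-b|\bigr)$ together with $\sum_i d_i=\binom n2=\sum_i(n-i)$, I obtain
\[
F\le\sum_{i=1}^n\min(d_i,n-i)=\binom n2-\tfrac12\sum_{i=1}^n\bigl|d_i-(n-i)\bigr|.
\]
Thus it suffices to show $\sum_i|d_i-(n-i)|>\tfrac12\binom n2$. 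Here I would set $g_i:=d_i-(n-i)$ and note that, because $d_i$ is nondecreasing while $n-i$ strictly decreases, $g_{i+1}-g_i=(d_{i+1}-d_i)+1\ge1$, so $g_1<\cdots<g_n$ are \emph{distinct} integers. The minimum of $\sum|g_i|$ over all $n$-element sets of distinct integers is attained by the integers of least absolute value and equals $\lfloor n^2/4\rfloor$, which for $n\ge 2$ strictly exceeds $\tfrac{n(n-1)}4=\tfrac12\binom n2$. Hence $\sum_i|d_i-(n-i)|\ge\lfloor n^2/4\rfloor>\tfrac12\binom n2$, giving $F<\binom n2-\tfrac14\binom n2=\tfrac34\binom n2$ and therefore $\bw(r,T)\le F/\binom n2<3/4$.

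The step I expect to be the genuine obstacle is resisting the temptation to argue class-by-class. Within a set of equal-degree vertices strict Copeland fairness imposes nothing, so one is tempted to bound the intra-class backward arcs by the total number of intra-class arcs; but that crude estimate can be pushed above $3/4$ (in fact arbitrarily close to $1$), because it ignores that a high-degree class must actually \emph{win} many arcs. Passing to the global rank order is exactly what captures this tension: the forced monotonicity of the degree sequence becomes the strict monotonicity of $g_i$, i.e.\ the distinctness of the integers $g_i$, and the only nontrivial inequality needed is the elementary fact that $n$ distinct integers cannot all cluster near $0$. I would finish by checking the extremal cases (regular tournaments for odd $n$, two equal degree classes for even $n$), which make the bound sharp and show that the strict inequality is approached but never attained, consistent with $\sup=3/4$; this quantitative slack (of order $\tfrac1{4n}$) is what I would later match from below in Theorem~\ref{teor.EN(Cop,sCop)=3/4}.
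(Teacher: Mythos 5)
Your derivation of the Corollary from Proposition~\ref{prop.less} is exactly the paper's (the paper treats it as immediate, via nonemptiness of the classes and $\mathsf{Cop}({T})\subseteq\mathsf{sCop}({T})$), and your proof of the Proposition itself is correct but takes a genuinely different route. Both arguments begin identically: sort the vertices by rank, observe that strict Copeland fairness forces the degree sequence to be nondecreasing, and reduce to bounding the position-forward arcs (the paper's ${E}^{<}$, your $F$). From there the paper splits the vertex set into two halves, with separate even and odd cases and figures: it shows that among the crossing pairs at most half can point upward (the two diagonal blocks contain equally many arcs, while the total out-degree of the top half is at least that of the bottom half), then adds the full count of intra-half arcs, getting $|{E}^{<}|\le\frac{l(3l-2)}{2}$ for $n=2l$ and $\frac{l(3l+1)}{2}$ for $n=2l+1$. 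You instead bound per vertex, $f_i\le\min(d_i,n-i)$, rewrite $\sum_i\min(d_i,n-i)=\binom{n}{2}-\tfrac12\sum_i|g_i|$ with $g_i=d_i-(n-i)$, and exploit that the $g_i$ are strictly increasing, hence distinct, integers, so $\sum_i|g_i|\ge\lfloor n^2/4\rfloor>\tfrac12\binom{n}{2}$. This buys a parity-uniform, figure-free argument resting on a single elementary lemma ($n$ distinct integers cannot all cluster near $0$), and it in fact reproduces the paper's bounds exactly: $\binom{2l}{2}-\tfrac12 l^2=\frac{l(3l-2)}{2}$ and $\binom{2l+1}{2}-\tfrac12 l(l+1)=\frac{l(3l+1)}{2}$, so nothing is lost in sharpness. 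Two small points to tidy if you write this up: prove the distinct-integer lemma rather than assert it (the $k$-th smallest absolute value among distinct integers is at least $\lfloor k/2\rfloor$, and $\sum_{k=1}^{n}\lfloor k/2\rfloor=\lfloor n^2/4\rfloor$), and note that both your proof and the paper's implicitly assume $n\ge 2$ so that $|{E}|>0$ and $\bw$ is defined.
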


\begin{proof}[Proof of Proposition~\ref{prop.less}.]
Let ${T}=\la{V},{E}\ra$ be a tournament and ${r}$ be its strict Copeland fair ranking. Without loss of generality, we may assume that ${V}=\{1,\ldots,{n}\}$ and that ${i}<{j}$ whenever ${r}(i)<{r}({j})$ for all ${i},{j}\in{V}$. 
Put 
\[
{E}^{<}\coloneq\{\la{i},{j}\ra\in{E}:{i}<{j}\}.
\]

\begin{figure}[h]
\centering

\begin{tikzpicture}
[scale=1,vect/.style={->,shorten >=1pt,>=latex'}]
\tkzSetUpLine[line width=1pt]
\tkzSetUpCircle[color=black, line width=0.2pt]
\tkzSetUpPoint[size=4,circle,fill=black!]
\tkzSetUpArc[delta=0,color=black,line width=.75pt]

\tkzDefPoints{0/0/A, 5/0/B}
\tkzDefPointBy[rotation=center A angle 90](B) \tkzGetPoint{D}
\foreach \i in {1,...,7} {
 \tkzDefPointOnLine[pos={0.125*\i}](A,B)\tkzGetPoint{x\i}
}
\tkzDefPointOnLine[pos=0.125](A,D)\tkzGetPoint{y1}
\tkzDefPointOnLine[pos=0.25](A,D)\tkzGetPoint{y2}
\tkzDefPointOnLine[pos=0.375](A,D)\tkzGetPoint{y3}
\tkzDefPointOnLine[pos=0.5](A,D)\tkzGetPoint{y4}
\tkzDefPointOnLine[pos=0.625](A,D)\tkzGetPoint{y5}
\tkzDefPointOnLine[pos=0.75](A,D)\tkzGetPoint{y6}
\tkzDefPointOnLine[pos=0.875](A,D)\tkzGetPoint{y7}

\tkzDefPointBy[translation= from A to B](y1)\tkzGetPoint{y11}
\tkzDefPointBy[translation= from A to B](y2)\tkzGetPoint{y12}
\tkzDefPointBy[translation= from A to B](y3)\tkzGetPoint{y13}
\tkzDefPointBy[translation= from A to B](y4)\tkzGetPoint{y14}
\tkzDefPointBy[translation= from A to B](y5)\tkzGetPoint{y15}
\tkzDefPointBy[translation= from A to B](y6)\tkzGetPoint{y16}
\tkzDefPointBy[translation= from A to B](y7)\tkzGetPoint{y17}
\tkzDefPointBy[translation= from A to B](D)\tkzGetPoint{C}

\tkzDefPointBy[translation= from A to D](x1)\tkzGetPoint{x11}
\tkzDefPointBy[translation= from A to D](x2)\tkzGetPoint{x12}
\tkzDefPointBy[translation= from A to D](x3)\tkzGetPoint{x13}
\tkzDefPointBy[translation= from A to D](x4)\tkzGetPoint{x14}
\tkzDefPointBy[translation= from A to D](x5)\tkzGetPoint{x15}
\tkzDefPointBy[translation= from A to D](x6)\tkzGetPoint{x16}
\tkzDefPointBy[translation= from A to D](x7)\tkzGetPoint{x17}

\tkzInterLL(x1,x11)(y1,y11)\tkzGetPoint{z11}

\tkzDefPointBy[translation= from A to z11](y1)\tkzGetPoint{z12}
\tkzDefPointBy[translation= from A to z11](z12)\tkzGetPoint{z23}
\tkzDefPointBy[translation= from A to z11](z23)\tkzGetPoint{z34}
\tkzDefPointBy[translation= from A to z11](z34)\tkzGetPoint{z45}
\tkzDefPointBy[translation= from A to z11](z45)\tkzGetPoint{z56}
\tkzDefPointBy[translation= from A to z11](z56)\tkzGetPoint{z67}
\tkzDefPointBy[translation= from A to z11](z67)\tkzGetPoint{z78}
\tkzDefPointBy[translation= from A to z11](x1)\tkzGetPoint{z21}
\tkzDefPointBy[translation= from A to z11](z21)\tkzGetPoint{z32}
\tkzDefPointBy[translation= from A to z11](z32)\tkzGetPoint{z43}
\tkzDefPointBy[translation= from A to z11](z43)\tkzGetPoint{z54}
\tkzDefPointBy[translation= from A to z11](z54)\tkzGetPoint{z65}
\tkzDefPointBy[translation= from A to z11](z65)\tkzGetPoint{z76}
\tkzDefPointBy[translation= from A to z11](z76)\tkzGetPoint{z87}
\tkzDefPointBy[translation= from A to z11](z11)\tkzGetPoint{z22}
\tkzDefPointBy[translation= from A to z11](z22)\tkzGetPoint{z33}
\tkzDefPointBy[translation= from A to z11](z33)\tkzGetPoint{z44}
\tkzDefPointBy[translation= from A to z11](z44)\tkzGetPoint{z55}
\tkzDefPointBy[translation= from A to z11](z55)\tkzGetPoint{z66}
\tkzDefPointBy[translation= from A to z11](z66)\tkzGetPoint{z77}
\tkzDefPointBy[translation= from A to z11](z77)\tkzGetPoint{z88}

\tkzInterLL(x2,x12)(y1,y11)\tkzGetPoint{z21}
\tkzInterLL(x2,x12)(y2,y12)\tkzGetPoint{z22}
\tkzInterLL(x2,x12)(y3,y13)\tkzGetPoint{z23}
\tkzInterLL(x2,x12)(y4,y14)\tkzGetPoint{z24}
\tkzInterLL(x2,x12)(y5,y15)\tkzGetPoint{z25}
\tkzInterLL(x2,x12)(y6,y16)\tkzGetPoint{z26}
\tkzInterLL(x2,x12)(y7,y17)\tkzGetPoint{z27}

\tkzInterLL(y2,y12)(x1,x11)\tkzGetPoint{z12}
\tkzInterLL(y2,y12)(x3,x13)\tkzGetPoint{z32}
\tkzInterLL(y2,y12)(x4,x14)\tkzGetPoint{z42}
\tkzInterLL(y2,y12)(x5,x15)\tkzGetPoint{z52}
\tkzInterLL(y2,y12)(x6,x16)\tkzGetPoint{z62}
\tkzInterLL(y2,y12)(x7,x17)\tkzGetPoint{z72}

\tkzInterLL(x3,x13)(y4,y14)\tkzGetPoint{z34}
\tkzInterLL(x4,x14)(y5,y15)\tkzGetPoint{z45}
\tkzInterLL(x5,x15)(y6,y16)\tkzGetPoint{z56}
\tkzInterLL(x6,x16)(y7,y17)\tkzGetPoint{z67}
\tkzInterLL(x4,x14)(y3,y13)\tkzGetPoint{z43}
\tkzInterLL(x5,x15)(y4,y14)\tkzGetPoint{z54}
\tkzInterLL(x6,x16)(y5,y15)\tkzGetPoint{z65}
\tkzInterLL(x7,x17)(y6,y16)\tkzGetPoint{z76}

\tkzDefPointBy[homothety=center x16 ratio .9](z77)\tkzGetPoint{k1}
\tkzDefPointBy[homothety=center x17 ratio .9](z67)\tkzGetPoint{k2}
\tkzDefPointBy[homothety=center z77 ratio .9](x16)\tkzGetPoint{k3}
\tkzDefPointBy[homothety=center z67 ratio .9](x17)\tkzGetPoint{k4}
\tkzDrawPolygon[green!60!black](k1,k2,k3,k4)

\tkzDefPointBy[translation= from z33 to z22](k1)\tkzGetPoint{k5} 
\tkzDefPointBy[translation= from z33 to z22](k2)\tkzGetPoint{k6} 
\tkzDefPointBy[translation= from z43 to z32](k3)\tkzGetPoint{k7} 
\tkzDefPointBy[translation= from z43 to z32](k4)\tkzGetPoint{k8} 
\tkzDrawPolygon[green!60!black](k5,k6,k7,k8)

\tkzDefPointBy[translation= from z42 to z22](k5)\tkzGetPoint{k9} 
\tkzDefPointBy[translation= from z43 to z23](k6)\tkzGetPoint{k10} 
\tkzDefPointBy[translation= from z43 to z23](k7)\tkzGetPoint{k11} 
\tkzDefPointBy[translation= from z43 to z23](k8)\tkzGetPoint{k12} 
\tkzDrawPolygon[green!60!black](k9,k10,k11,k12)

\tkzDefMidPoint(z22,z33)\tkzGetPoint{m1}
\tkzDefPointOnLine[pos=0.4](m1,z22)\tkzGetPoint{n}
\tkzDefMidPoint(z22,m1)\tkzGetPoint{m2}
\tkzCalcLength(m1,n) \tkzGetLength{ro}
\tkzDefPointBy[translation= from z12 to z32](m1)\tkzGetPoint{m3} \tkzDefCircle[R](m3,\ro) \tkzGetPoint{p} \tkzDrawCircle[fill=blue!50](m3,p)
\tkzDefPointBy[translation= from z12 to z22](m3)\tkzGetPoint{m4} \tkzDefCircle[R](m4,\ro) \tkzGetPoint{p} \tkzDrawCircle[fill=blue!50](m4,p)
\tkzDefPointBy[translation= from z12 to z32](m3)\tkzGetPoint{m5} \tkzDefCircle[R](m5,\ro) \tkzGetPoint{p} \tkzDrawCircle[fill=blue!50](m5,p)
\tkzDefPointBy[translation= from z12 to z32](m4)\tkzGetPoint{m6} \tkzDefCircle[R](m6,\ro) \tkzGetPoint{p} \tkzDrawCircle[fill=blue!50](m6,p)
\tkzDefPointBy[translation= from z21 to z22](m3)\tkzGetPoint{m7} \tkzDefCircle[R](m7,\ro) \tkzGetPoint{p} \tkzDrawCircle[fill=blue!50](m7,p)
\tkzDefPointBy[translation= from z21 to z22](m4)\tkzGetPoint{m8} \tkzDefCircle[R](m8,\ro) \tkzGetPoint{p} \tkzDrawCircle[fill=blue!50](m8,p)
\tkzDefPointBy[translation= from z21 to z22](m6)\tkzGetPoint{m9} \tkzDefCircle[R](m9,\ro) \tkzGetPoint{p} \tkzDrawCircle[fill=blue!50](m9,p)
\tkzDefPointBy[translation= from z21 to z22](m9)\tkzGetPoint{m11} \tkzDefCircle[R](m11,\ro) \tkzGetPoint{p} \tkzDrawCircle[fill=blue!50](m11,p)
\tkzDefPointBy[translation= from z22 to z12](m11)\tkzGetPoint{m10} \tkzDefCircle[R](m10,\ro) \tkzGetPoint{p} \tkzDrawCircle[fill=blue!50](m10,p)
\tkzDefPointBy[translation= from z21 to z22](m11)\tkzGetPoint{m12} \tkzDefCircle[R](m12,\ro) \tkzGetPoint{p} \tkzDrawCircle[fill=blue!50](m12,p)
\tkzDefPointBy[translation= from z22 to z23](m1)\tkzGetPoint{m13} \tkzDefCircle[R](m13,\ro) \tkzGetPoint{p} \tkzDrawCircle[fill=red!50](m13,p)
\tkzDefPointBy[translation= from z52 to z22](m12)\tkzGetPoint{m14} \tkzDefCircle[R](m14,\ro) \tkzGetPoint{p} \tkzDrawCircle[fill=red!50](m14,p)
\tkzDefPointBy[translation= from z12 to z23](m14)\tkzGetPoint{m15} \tkzDefCircle[R](m15,\ro) \tkzGetPoint{p} \tkzDrawCircle[fill=red!50](m15,p)
\tkzDefPointBy[translation= from z32 to z23](m14)\tkzGetPoint{m16} \tkzDefCircle[R](m16,\ro) \tkzGetPoint{p} \tkzDrawCircle[fill=red!50](m16,p)
\tkzDefPointBy[translation= from z12 to z23](m15)\tkzGetPoint{m17} \tkzDefCircle[R](m17,\ro) \tkzGetPoint{p} \tkzDrawCircle[fill=red!50](m17,p)

\tkzDrawSegments(z22,x12 z22,y12  C,x12 C,y12)
\tkzDrawSegments[line width=0.5pt](z23,y13 z24,y14  z26,y16 z27,y17 z32,x13 z42,x14 z62,x16 z72,x17 z52,x15 z25,y15)
\tkzDrawSegments[line width=0.5pt,dashed](z21,y11 z22,x2 z22,y2  z23,y3 z24,y4 z25,y5 z26,y6 z27,y7 z12,x11 z32,x3 z42,x4 z52,x5 z62,x6 z72,x7 x12,D y12,B)

\tkzDrawLines[add=-0.2 and -0.2](z23,z32 x17,y17 z34,z43 z45,z54 z56,z65 z67,z76)
\tkzDrawLines[add=-0.2 and -0.2](z22,z33 z33,z44 z44,z55 z55,z66 z66,z77 z77,C)

\tkzLabelPoint[above right](x2){$3$}
\tkzLabelPoint[above right](x3){$3$}
\tkzLabelPoint[above right](x4){$5$}
\tkzLabelPoint[above right](x5){$5$}
\tkzLabelPoint[above right](x6){$6$}
\tkzLabelPoint[above right](x7){$9$}
\tkzLabelPoint[shift={(2mm,5.9mm)}](B){$r$}

\tkzLabelPoint[below right=+0.5mm](z22){$1$} 
\tkzLabelPoint[below right=+0.5mm](z32){$2$}
\tkzLabelPoint[below right=+0.5mm](z42){$3$}
\tkzLabelPoint[below right=+0.5mm](z52){$4$}
\tkzLabelPoint[below right=+0.5mm](z62){$5$}
\tkzLabelPoint[below right=+0.5mm](z72){$6$}
\tkzLabelPoint[below right=-0.3mm](y12){\small\textit{No.}}

\tkzLabelPoint[shift={(3.2mm,6mm)}](D){$r$} 
\tkzLabelPoint[below right=+0.5mm](y7){$6$} 
\tkzLabelPoint[below right=+0.5mm](y6){$5$} 
\tkzLabelPoint[below right=+0.5mm](y5){$5$} 
\tkzLabelPoint[below right=+0.5mm](y4){$3$} 
\tkzLabelPoint[below right=+0.5mm](y3){$3$} 
\tkzLabelPoint[below right=+0.5mm](D){$9$} 

\tkzLabelPoint[above left=-1mm](x12){\small\textit{No.}}
\tkzLabelPoint[below left=+0.5mm](x12){$6$} 
\tkzLabelPoint[below left=+0.5mm](z27){$5$} 
\tkzLabelPoint[below left=+0.5mm](z26){$4$}
\tkzLabelPoint[below left=+0.5mm](z25){$3$} 
\tkzLabelPoint[below left=+0.5mm](z24){$2$} 
\tkzLabelPoint[below left=+0.5mm](z23){$1$} 
\end{tikzpicture}

\caption{Example of a tournament and its ranking}
\label{fig:tourTable}
\end{figure}
It is convenient to visualise a tournament in the form of a table. For instance, Figure~\ref{fig:tourTable} represents a tournament with vertexes $\{1,\ldots,6\}$, whose ranks are 3, 3, 5, 5, 6, and 9, respectively. The arcs of the tournament correspond to the circles; for example, the arc ${1}\,{\to}\,{2}$ corresponds to the leftmost circle. The out-degree of a vertex equals the number of circles in the column with its number. It is easy to see that the ranking of the tournament is strict Copeland fair, but not Copeland fair (because vertexes $4$ and $5$ have the same out-degree but different ranks). The arcs from the set ${E}^{<}$ are coloured red; the remaining arcs are coloured blue. The backward arcs are outlined with a square green frame.

Note that $\overleftarrow{\mathsf{E}}({r},{T})\subseteq{E}^{<}$,   
so it is sufficient to prove 
$$\frac{|{E}^{<}|}{|{E}|}<\frac{3}{4}.$$ 

\begin{figure}[h]
\centering
\resizebox{6cm}{!}{
\begin{tikzpicture}
[scale=1,vect/.style={->,shorten >=1pt,>=latex'}]
\tkzSetUpLine[line width=1pt]
\tkzSetUpCircle[color=black, line width=1pt]
\tkzSetUpPoint[size=4,circle,fill=black!]
\tkzSetUpArc[delta=0,color=black,line width=.75pt]

\tkzDefPoints{0/0/A, 7/0/B}
\tkzDefPointBy[rotation=center A angle 90](B) \tkzGetPoint{D}
\tkzDefPointOnLine[pos=0.1](A,B)\tkzGetPoint{x1}
\tkzDefPointOnLine[pos=0.2](A,B)\tkzGetPoint{x2}
\tkzDefPointOnLine[pos=0.3](A,B)\tkzGetPoint{x3}
\tkzDefPointOnLine[pos=0.4](A,B)\tkzGetPoint{x4}
\tkzDefPointOnLine[pos=0.5](A,B)\tkzGetPoint{x5}
\tkzDefPointOnLine[pos=0.6](A,B)\tkzGetPoint{x6}
\tkzDefPointOnLine[pos=0.7](A,B)\tkzGetPoint{x7}
\tkzDefPointOnLine[pos=0.8](A,B)\tkzGetPoint{x8}
\tkzDefPointOnLine[pos=0.9](A,B)\tkzGetPoint{x9}
\tkzDefPointOnLine[pos=0.1](A,D)\tkzGetPoint{y1}
\tkzDefPointOnLine[pos=0.2](A,D)\tkzGetPoint{y2}
\tkzDefPointOnLine[pos=0.3](A,D)\tkzGetPoint{y3}
\tkzDefPointOnLine[pos=0.4](A,D)\tkzGetPoint{y4}
\tkzDefPointOnLine[pos=0.5](A,D)\tkzGetPoint{y5}
\tkzDefPointOnLine[pos=0.6](A,D)\tkzGetPoint{y6}
\tkzDefPointOnLine[pos=0.7](A,D)\tkzGetPoint{y7}
\tkzDefPointOnLine[pos=0.8](A,D)\tkzGetPoint{y8}
\tkzDefPointOnLine[pos=0.9](A,D)\tkzGetPoint{y9}
\tkzDefPointBy[translation= from A to B](y1)\tkzGetPoint{y11}
\tkzDefPointBy[translation= from A to B](y2)\tkzGetPoint{y12}
\tkzDefPointBy[translation= from A to B](y3)\tkzGetPoint{y13}
\tkzDefPointBy[translation= from A to B](y4)\tkzGetPoint{y14}
\tkzDefPointBy[translation= from A to B](y5)\tkzGetPoint{y15}
\tkzDefPointBy[translation= from A to B](y6)\tkzGetPoint{y16}
\tkzDefPointBy[translation= from A to B](y7)\tkzGetPoint{y17}
\tkzDefPointBy[translation= from A to B](y8)\tkzGetPoint{y18}
\tkzDefPointBy[translation= from A to B](y9)\tkzGetPoint{y19}
\tkzDefPointBy[translation= from A to B](D)\tkzGetPoint{C}
\tkzDefPointBy[translation= from A to D](x1)\tkzGetPoint{x11}
\tkzDefPointBy[translation= from A to D](x2)\tkzGetPoint{x12}
\tkzDefPointBy[translation= from A to D](x3)\tkzGetPoint{x13}
\tkzDefPointBy[translation= from A to D](x4)\tkzGetPoint{x14}
\tkzDefPointBy[translation= from A to D](x5)\tkzGetPoint{x15}
\tkzDefPointBy[translation= from A to D](x6)\tkzGetPoint{x16}
\tkzDefPointBy[translation= from A to D](x7)\tkzGetPoint{x17}
\tkzDefPointBy[translation= from A to D](x8)\tkzGetPoint{x18}
\tkzDefPointBy[translation= from A to D](x9)\tkzGetPoint{x19}

\tkzInterLL(x5,x15)(y5,y15)\tkzGetPoint{z}
\tkzInterLL(x1,x11)(y1,y11)\tkzGetPoint{z11}

\tkzDefPointBy[translation= from A to z11](y1)\tkzGetPoint{z12}
\tkzDefPointBy[translation= from A to z11](z12)\tkzGetPoint{z23}
\tkzDefPointBy[translation= from A to z11](z23)\tkzGetPoint{z34}
\tkzDefPointBy[translation= from A to z11](z34)\tkzGetPoint{z45}
\tkzDefPointBy[translation= from A to z11](z45)\tkzGetPoint{z56}
\tkzDefPointBy[translation= from A to z11](z56)\tkzGetPoint{z67}
\tkzDefPointBy[translation= from A to z11](z67)\tkzGetPoint{z78}
\tkzDefPointBy[translation= from A to z11](z78)\tkzGetPoint{z89}
\tkzDefPointBy[translation= from A to z11](x1)\tkzGetPoint{z21}
\tkzDefPointBy[translation= from A to z11](z21)\tkzGetPoint{z32}
\tkzDefPointBy[translation= from A to z11](z32)\tkzGetPoint{z43}
\tkzDefPointBy[translation= from A to z11](z43)\tkzGetPoint{z54}
\tkzDefPointBy[translation= from A to z11](z54)\tkzGetPoint{z65}
\tkzDefPointBy[translation= from A to z11](z65)\tkzGetPoint{z76}
\tkzDefPointBy[translation= from A to z11](z76)\tkzGetPoint{z87}
\tkzDefPointBy[translation= from A to z11](z87)\tkzGetPoint{z98}
\tkzDefPointBy[translation= from A to z11](z11)\tkzGetPoint{z22}
\tkzDefPointBy[translation= from A to z11](z22)\tkzGetPoint{z33}
\tkzDefPointBy[translation= from A to z11](z33)\tkzGetPoint{z44}
\tkzDefPointBy[translation= from A to z11](z44)\tkzGetPoint{z55}
\tkzDefPointBy[translation= from A to z11](z55)\tkzGetPoint{z66}
\tkzDefPointBy[translation= from A to z11](z66)\tkzGetPoint{z77}
\tkzDefPointBy[translation= from A to z11](z77)\tkzGetPoint{z88}
\tkzDefPointBy[translation= from A to z11](z88)\tkzGetPoint{z99}

\tkzInterLL(x2,x12)(y8,y18)\tkzGetPoint{z28}
\tkzInterLL(x7,x17)(y8,y18)\tkzGetPoint{z78}
\tkzInterLL(x2,x12)(y3,y13)\tkzGetPoint{z23}
\tkzInterLL(x7,x17)(y3,y13)\tkzGetPoint{z73}

\tkzDrawPolygon[fill=myblue](A,x5,z,y5)
\tkzDrawPolygon[fill=myred](B,x5,z,y15)
\tkzDrawPolygon[fill=myblue](C,x15,z,y15)
\tkzDrawPolygon[fill=mygreen](D,x15,z,y5)

\tkzDrawLine[add=0 and 0.1,-Stealth](A, B)
\tkzDrawLine[add=0 and 0.1,-Stealth](A, D)

\tkzDrawSegment[line width=0.5pt](y1, y11)
\tkzDrawSegment[line width=0.5pt](y2, y12)
\tkzDrawSegment[line width=0.5pt](y3, y13)
\tkzDrawSegment[line width=0.5pt](y4, y14)
\tkzDrawSegment(y5, y15)
\tkzDrawSegment[line width=0.5pt](y6, y16)
\tkzDrawSegment[line width=0.5pt](y7, y17)
\tkzDrawSegment[line width=0.5pt](y8, y18)
\tkzDrawSegment[line width=0.5pt](y9, y19)
\tkzDrawSegment(D, C)
\tkzDrawSegment[line width=0.5pt](x1, x11)
\tkzDrawSegment[line width=0.5pt](x2, x12)
\tkzDrawSegment[line width=0.5pt](x3, x13)
\tkzDrawSegment[line width=0.5pt](x4, x14)
\tkzDrawSegment(x5, x15)
\tkzDrawSegment[line width=0.5pt](x6, x16)
\tkzDrawSegment[line width=0.5pt](x7, x17)
\tkzDrawSegment[line width=0.5pt](x8, x18)
\tkzDrawSegment[line width=0.5pt](x9, x19)
\tkzDrawSegment(B, C)

\tkzDrawLine[add=-0.2 and -0.2](x1,y1)
\tkzDrawLine[add=-0.2 and -0.2](A,z11)

\tkzDrawLine[add=-0.2 and -0.2](z12,z21)
\tkzDrawLine[add=-0.2 and -0.2](z11,z22)

\tkzDrawLine[add=-0.2 and -0.2](z23,z32)
\tkzDrawLine[add=-0.2 and -0.2](z22,z33)

\tkzDrawLine[add=-0.2 and -0.2](z34,z43)
\tkzDrawLine[add=-0.2 and -0.2](z33,z44)

\tkzDrawLine[add=-0.2 and -0.2](z45,z54)
\tkzDrawLine[add=-0.2 and -0.2](z44,z55)

\tkzDrawLine[add=-0.2 and -0.2](x19,y19)
\tkzDrawLine[add=-0.2 and -0.2](z99,C)

\tkzDrawLine[add=-0.2 and -0.2](z56,z65)
\tkzDrawLine[add=-0.2 and -0.2](z55,z66)

\tkzDrawLine[add=-0.2 and -0.2](z67,z76)
\tkzDrawLine[add=-0.2 and -0.2](z66,z77)

\tkzDrawLine[add=-0.2 and -0.2](z78,z87)
\tkzDrawLine[add=-0.2 and -0.2](z77,z88)

\tkzDrawLine[add=-0.2 and -0.2](z89,z98)
\tkzDrawLine[add=-0.2 and -0.2](z88,z99)

\tkzLabelLine[pos=1.08,right=-0.8mm](A,D){$j$}
\tkzLabelLine[pos=0.55,left](A,D){$l\!+\!1$}
\tkzLabelLine[pos=0.45,left](A,D){$l$}
\tkzLabelLine[pos=0.05,left](A,D){$1$}
\tkzLabelLine[pos=0.95,left](A,D){$2l$}
\tkzLabelPoint[above left](z28){{\Huge\textbf{A}}}
\tkzLabelPoint[above left](z78){{\Huge\textbf{D}}}
\tkzLabelPoint[above left](z23){{\Huge\textbf{C}}}
\tkzLabelPoint[above left](z73){{\Huge\textbf{B}}}
\tkzLabelPoint[below left](x6){$l\!+\!1$}

\tkzLabelLine[pos=1.07,above](A,B){$i$}
\tkzLabelLine[pos=0.9,below](A,x5){$l$}
\tkzLabelLine[pos=0.05,below](A,B){$1$}
\tkzLabelLine[pos=0.95,below](A,B){$2l$}
\end{tikzpicture}
}
\caption{Even case. $\text{A}\leqslant\text{B}$ because $\text{C}=\text{D}$ and $\text{A+C}\leqslant\text{D+B}$}
\label{fig:even:1}
\end{figure}

\emph{Case ${n}=2{l}$}. First we show
\begin{equation}\label{Lemma.3.even'}
|\{\la{i},{j}\ra\in{E}:{i}\leq{l},\:{j}>{l}\}|\ \leq\ |\{\la{i},{j}\ra\in{E}:{j}\leq{l},\:{i}>{l}\}|,  
\end{equation}
--- that is, in terms of Figure~\ref{fig:even:1}, the number of arcs in the green area (denoted by $\text{A}$) does not exceed the number of arcs in the red area (denoted by $\text{B}$).

Indeed, since for each pair of cells symmetrical with respect to the diagonal, exactly one corresponds to an arc, the number of arcs in each of the two blue regions is equal to $\frac{{l}({l}-1)}{2}$, that is
\[
|\{\la{i},{j}\ra\in{E}:{i}\leq{l},\:{j}\leq{l}\}|
\ =\ \textstyle\frac{{l}({l}-1)}{2}\ =\ 
|\{\la{i},{j}\ra\in{E}:{i}>{l},\:{j}>{l}\}|.
\]
By strict Copeland fairness, the number of arcs in the columns does not decrease as the column number increases:
\vspace{-3mm}
\[
|\{\la{i},{j}\ra\in{E}:{i}\leq{l}\}|\ =\ 
\sum_{{i}=1}^{{l}}\d({i})\ \leq \ \sum_{{i}={l}+1}^{2{l}}\d({i})\ =\ 
|\{\la{i},{j}\ra\in{E}:{i}>{l}\}|.
\]
\vspace{-3mm}
Thus, 
\vspace{-3mm}
\begin{align*}
&|\{\la{i},{j}\ra\in{E}:{i}\leq{l},\:{j}>{l}\}|\ =\ \\
= \ &|\{\la{i},{j}\ra\in{E}:{i}\leq{l}\}|\  
-\ |\{\la{i},{j}\ra\in{E}:{i}\leq{l},\:{j}\leq{l}\}|\ \leq\ \\
\ \leq\ &|\{\la{i},{j}\ra\in{E}:{i}>{l}\}|\ -\ |\{\la{i},{j}\ra\in{E}:{i}>{l},\:{j}>{l}\}|\ =\ \\
=\ &|\{\la{i},{j}\ra\in{E}:{i}>{l},\:{j}\leq{l}\}|
\end{align*}
---  that is, in terms of Figure~\ref{fig:even:1}, 
$\text{A}=(\text{A}+\text{C})-\text{C}\leqslant(\text{B}+\text{D})-\text{D}=\text{B}$.

Now, using (\ref{Lemma.3.even'}), we can write
\begin{align*}
&2\,|\{\la{i},{j}\ra\in{E}^{<}:{i}\leq{l},\:{j}>{l}\}|\ = \ 2\,|\{\la{i},{j}\ra\in{E}:{i}\leq{l},\:{j}>{l}\}|\ \leq\\
\ \leq\ &|\{\la{i},{j}\ra\in{E}:{i}\leq{l},\:{j}>{l}\}| + 
|\{\la{i},{j}\ra\in{E}:{j}\leq{l},\:{i}>{l}\}|\ =\\
\ =\ &|\{\la{i},{j}\ra\in{E}:{i}\leq{l},\:{j}>{l}\}| + 
|\{\la{j},{i}\ra\in{E}:{i}\leq{l},\:{j}>{l}\}|\ =\ {l}^2,
\end{align*}
so we have, 
\begin{equation}\label{Lemma.4.even'}\textstyle
    |\{\la{i},{j}\ra\in{E}^{<},\:{i}\leq{l},\:{j}>{l}\}|\ \leq\ \frac{{l}^2}{2}
\end{equation}
---  that is, in terms of Figure~\ref{fig:even:1}, $\text{A}\leq\frac{{l}^2}{2}.$

\begin{figure}[h]
\centering
\resizebox{6cm}{!}{
\begin{tikzpicture}
[scale=1,vect/.style={->,shorten >=1pt,>=latex'}]
\tkzSetUpLine[line width=1pt]
\tkzSetUpCircle[color=black, line width=1pt]
\tkzSetUpPoint[size=4,circle,fill=black!]
\tkzSetUpArc[delta=0,color=black,line width=.75pt]

\tkzDefPoints{0/0/A, 7/0/B}
\tkzDefPointBy[rotation=center A angle 90](B) \tkzGetPoint{D}
\tkzDefPointOnLine[pos=0.1](A,B)\tkzGetPoint{x1}
\tkzDefPointOnLine[pos=0.2](A,B)\tkzGetPoint{x2}
\tkzDefPointOnLine[pos=0.3](A,B)\tkzGetPoint{x3}
\tkzDefPointOnLine[pos=0.4](A,B)\tkzGetPoint{x4}
\tkzDefPointOnLine[pos=0.5](A,B)\tkzGetPoint{x5}
\tkzDefPointOnLine[pos=0.6](A,B)\tkzGetPoint{x6}
\tkzDefPointOnLine[pos=0.7](A,B)\tkzGetPoint{x7}
\tkzDefPointOnLine[pos=0.8](A,B)\tkzGetPoint{x8}
\tkzDefPointOnLine[pos=0.9](A,B)\tkzGetPoint{x9}
\tkzDefPointOnLine[pos=0.1](A,D)\tkzGetPoint{y1}
\tkzDefPointOnLine[pos=0.2](A,D)\tkzGetPoint{y2}
\tkzDefPointOnLine[pos=0.3](A,D)\tkzGetPoint{y3}
\tkzDefPointOnLine[pos=0.4](A,D)\tkzGetPoint{y4}
\tkzDefPointOnLine[pos=0.5](A,D)\tkzGetPoint{y5}
\tkzDefPointOnLine[pos=0.6](A,D)\tkzGetPoint{y6}
\tkzDefPointOnLine[pos=0.7](A,D)\tkzGetPoint{y7}
\tkzDefPointOnLine[pos=0.8](A,D)\tkzGetPoint{y8}
\tkzDefPointOnLine[pos=0.9](A,D)\tkzGetPoint{y9}
\tkzDefPointBy[translation= from A to B](y1)\tkzGetPoint{y11}
\tkzDefPointBy[translation= from A to B](y2)\tkzGetPoint{y12}
\tkzDefPointBy[translation= from A to B](y3)\tkzGetPoint{y13}
\tkzDefPointBy[translation= from A to B](y4)\tkzGetPoint{y14}
\tkzDefPointBy[translation= from A to B](y5)\tkzGetPoint{y15}
\tkzDefPointBy[translation= from A to B](y6)\tkzGetPoint{y16}
\tkzDefPointBy[translation= from A to B](y7)\tkzGetPoint{y17}
\tkzDefPointBy[translation= from A to B](y8)\tkzGetPoint{y18}
\tkzDefPointBy[translation= from A to B](y9)\tkzGetPoint{y19}
\tkzDefPointBy[translation= from A to B](D)\tkzGetPoint{C}
\tkzDefPointBy[translation= from A to D](x1)\tkzGetPoint{x11}
\tkzDefPointBy[translation= from A to D](x2)\tkzGetPoint{x12}
\tkzDefPointBy[translation= from A to D](x3)\tkzGetPoint{x13}
\tkzDefPointBy[translation= from A to D](x4)\tkzGetPoint{x14}
\tkzDefPointBy[translation= from A to D](x5)\tkzGetPoint{x15}
\tkzDefPointBy[translation= from A to D](x6)\tkzGetPoint{x16}
\tkzDefPointBy[translation= from A to D](x7)\tkzGetPoint{x17}
\tkzDefPointBy[translation= from A to D](x8)\tkzGetPoint{x18}
\tkzDefPointBy[translation= from A to D](x9)\tkzGetPoint{x19}

\tkzInterLL(x5,x15)(y5,y15)\tkzGetPoint{z}
\tkzInterLL(x1,x11)(y1,y11)\tkzGetPoint{z11}

\tkzDefPointBy[translation= from A to z11](y1)\tkzGetPoint{z12}
\tkzDefPointBy[translation= from A to z11](z12)\tkzGetPoint{z23}
\tkzDefPointBy[translation= from A to z11](z23)\tkzGetPoint{z34}
\tkzDefPointBy[translation= from A to z11](z34)\tkzGetPoint{z45}
\tkzDefPointBy[translation= from A to z11](z45)\tkzGetPoint{z56}
\tkzDefPointBy[translation= from A to z11](z56)\tkzGetPoint{z67}
\tkzDefPointBy[translation= from A to z11](z67)\tkzGetPoint{z78}
\tkzDefPointBy[translation= from A to z11](z78)\tkzGetPoint{z89}
\tkzDefPointBy[translation= from A to z11](x1)\tkzGetPoint{z21}
\tkzDefPointBy[translation= from A to z11](z21)\tkzGetPoint{z32}
\tkzDefPointBy[translation= from A to z11](z32)\tkzGetPoint{z43}
\tkzDefPointBy[translation= from A to z11](z43)\tkzGetPoint{z54}
\tkzDefPointBy[translation= from A to z11](z54)\tkzGetPoint{z65}
\tkzDefPointBy[translation= from A to z11](z65)\tkzGetPoint{z76}
\tkzDefPointBy[translation= from A to z11](z76)\tkzGetPoint{z87}
\tkzDefPointBy[translation= from A to z11](z87)\tkzGetPoint{z98}
\tkzDefPointBy[translation= from A to z11](z11)\tkzGetPoint{z22}
\tkzDefPointBy[translation= from A to z11](z22)\tkzGetPoint{z33}
\tkzDefPointBy[translation= from A to z11](z33)\tkzGetPoint{z44}
\tkzDefPointBy[translation= from A to z11](z44)\tkzGetPoint{z55}
\tkzDefPointBy[translation= from A to z11](z55)\tkzGetPoint{z66}
\tkzDefPointBy[translation= from A to z11](z66)\tkzGetPoint{z77}
\tkzDefPointBy[translation= from A to z11](z77)\tkzGetPoint{z88}
\tkzDefPointBy[translation= from A to z11](z88)\tkzGetPoint{z99}

\tkzInterLL(x2,x12)(y8,y18)\tkzGetPoint{z28}
\tkzInterLL(x7,x17)(y8,y18)\tkzGetPoint{z78}
\tkzInterLL(x2,x12)(y3,y13)\tkzGetPoint{z23}
\tkzInterLL(x7,x17)(y3,y13)\tkzGetPoint{z73}

\tkzDrawPolygon[fill=myblue!80!black](y1,y5,z45,z44,z34,z33,z23,z22,z12,z11)
\tkzDrawPolygon[fill=myblue!70](x15,x19,z99,z89,z88,z78,z77,z67,z66,z56)
\tkzDrawPolygon[fill=mygreen](D,x15,z,y5)

\tkzDrawLine[add=0 and 0.1,-Stealth](A, B)
\tkzDrawLine[add=0 and 0.1,-Stealth](A, D)

\tkzDrawSegment[line width=0.5pt](y1, y11)
\tkzDrawSegment[line width=0.5pt](y2, y12)
\tkzDrawSegment[line width=0.5pt](y3, y13)
\tkzDrawSegment[line width=0.5pt](y4, y14)
\tkzDrawSegment(y5, y15)
\tkzDrawSegment[line width=0.5pt](y6, y16)
\tkzDrawSegment[line width=0.5pt](y7, y17)
\tkzDrawSegment[line width=0.5pt](y8, y18)
\tkzDrawSegment[line width=0.5pt](y9, y19)
\tkzDrawSegment(D, C)
\tkzDrawSegment[line width=0.5pt](x1, x11)
\tkzDrawSegment[line width=0.5pt](x2, x12)
\tkzDrawSegment[line width=0.5pt](x3, x13)
\tkzDrawSegment[line width=0.5pt](x4, x14)
\tkzDrawSegment(x5, x15)
\tkzDrawSegment[line width=0.5pt](x6, x16)
\tkzDrawSegment[line width=0.5pt](x7, x17)
\tkzDrawSegment[line width=0.5pt](x8, x18)
\tkzDrawSegment[line width=0.5pt](x9, x19)
\tkzDrawSegment(B, C)

\tkzDrawLine[add=-0.2 and -0.2](x1,y1)
\tkzDrawLine[add=-0.2 and -0.2](A,z11)

\tkzDrawLine[add=-0.2 and -0.2](z12,z21)
\tkzDrawLine[add=-0.2 and -0.2](z11,z22)

\tkzDrawLine[add=-0.2 and -0.2](z23,z32)
\tkzDrawLine[add=-0.2 and -0.2](z22,z33)

\tkzDrawLine[add=-0.2 and -0.2](z34,z43)
\tkzDrawLine[add=-0.2 and -0.2](z33,z44)

\tkzDrawLine[add=-0.2 and -0.2](z45,z54)
\tkzDrawLine[add=-0.2 and -0.2](z44,z55)

\tkzDrawLine[add=-0.2 and -0.2](x19,y19)
\tkzDrawLine[add=-0.2 and -0.2](z99,C)

\tkzDrawLine[add=-0.2 and -0.2](z56,z65)
\tkzDrawLine[add=-0.2 and -0.2](z55,z66)

\tkzDrawLine[add=-0.2 and -0.2](z67,z76)
\tkzDrawLine[add=-0.2 and -0.2](z66,z77)

\tkzDrawLine[add=-0.2 and -0.2](z78,z87)
\tkzDrawLine[add=-0.2 and -0.2](z77,z88)

\tkzDrawLine[add=-0.2 and -0.2](z89,z98)
\tkzDrawLine[add=-0.2 and -0.2](z88,z99)

\tkzLabelLine[pos=1.08,right=-0.8mm](A,D){$j$}
\tkzLabelLine[pos=0.55,left](A,D){$l\!+\!1$}
\tkzLabelLine[pos=0.45,left](A,D){$l$}
\tkzLabelLine[pos=0.05,left](A,D){$1$}
\tkzLabelLine[pos=0.95,left](A,D){$2l$}
\tkzLabelPoint[below left](x6){$l\!+\!1$}

\tkzLabelLine[pos=1.07,above](A,B){$i$}
\tkzLabelLine[pos=0.9,below](A,x5){$l$}
\tkzLabelLine[pos=0.05,below](A,B){$1$}
\tkzLabelLine[pos=0.95,below](A,B){$2l$}

\tkzLabelPoint[above left](z28){{\Huge\textbf{A}}}
\tkzLabelPoint[above left](z78){{\Huge\textbf{E}}}
\tkzLabelPoint[above left](z23){{\Huge\textbf{F}}}
\end{tikzpicture}
}
\caption{Even case. $\text{E}+\text{A}+\text{F}\leqslant\textstyle\frac{{l}({l}-1)}{2} + \frac{{l}^2}{2} + \frac{{l}({l}-1)}{2}$}
\label{fig:even:2}
\end{figure}

Next, using (\ref{Lemma.4.even'}), we get (see Figure~\ref{fig:even:2})
\begin{align*}
|{E}^{<}|\!\ =\ &|\{\la{i},{j}\ra\in{E}^{<},\:{i}>{l}\}|\ +\\
+\ &|\{\la{i},{j}\ra\in{E}^{<},\:{i}\leq{l},\:{j}>{l}\}|\ +\\
+\ &|\{\la{i},{j}\ra\in{E}^{<},\:{i}\leq{l},\:{j}\leq{l}\}|\ \leq\\
\leq\ &\textstyle\frac{{l}({l}-1)}{2} + \frac{{l}^2}{2} + \frac{{l}({l}-1)}{2} \ =\ \frac{{l}(3{l}-2)}{2}.
\end{align*} 

It follows that
\[
\frac{|{E}^{<}|}{|{E}|}\ \leq\ \frac{\frac{{l}(3{l}-2)}{2}}{\frac{2{l}(2{l}-1)}{2}}\ =\ 
\frac{3{l}-2}{4{l}-2} 
\ <\  \frac{3}{4}.
\]

\emph{Case ${n}=2{l}+1$}. Similar to the even case, we have
\begin{equation}\label{Lemma.3.odd'}
|\{\la{i},{j}\ra\in{E}:{i}\leq{l},\:{j}>{l}\}|\ \leq\ |\{\la{i},{j}\ra\in{E}:{j}\leq{l}\!+\!1,\:{i}>{l}\!+\!1\}|.  
\end{equation}
\begin{figure}[h]
\centering
\resizebox{6cm}{!}{
\begin{tikzpicture}
[scale=1,vect/.style={->,shorten >=1pt,>=latex'}]
\tkzSetUpLine[line width=1pt]
\tkzSetUpCircle[color=black, line width=1pt]
\tkzSetUpPoint[size=4,circle,fill=black!]
\tkzSetUpArc[delta=0,color=black,line width=.75pt]

\tkzDefPoints{0/0/A, 7/0/B}
\tkzDefPointBy[rotation=center A angle 90](B) \tkzGetPoint{D}
\tkzDefPointOnLine[pos=0.1](A,B)\tkzGetPoint{x1}
\tkzDefPointOnLine[pos=0.2](A,B)\tkzGetPoint{x2}
\tkzDefPointOnLine[pos=0.3](A,B)\tkzGetPoint{x3}
\tkzDefPointOnLine[pos=0.4](A,B)\tkzGetPoint{x4}
\tkzDefPointOnLine[pos=0.5](A,B)\tkzGetPoint{x5}
\tkzDefPointOnLine[pos=0.6](A,B)\tkzGetPoint{x6}
\tkzDefPointOnLine[pos=0.7](A,B)\tkzGetPoint{x7}
\tkzDefPointOnLine[pos=0.8](A,B)\tkzGetPoint{x8}
\tkzDefPointOnLine[pos=0.9](A,B)\tkzGetPoint{x9}
\tkzDefPointOnLine[pos=0.1](A,D)\tkzGetPoint{y1}
\tkzDefPointOnLine[pos=0.2](A,D)\tkzGetPoint{y2}
\tkzDefPointOnLine[pos=0.3](A,D)\tkzGetPoint{y3}
\tkzDefPointOnLine[pos=0.4](A,D)\tkzGetPoint{y4}
\tkzDefPointOnLine[pos=0.5](A,D)\tkzGetPoint{y5}
\tkzDefPointOnLine[pos=0.6](A,D)\tkzGetPoint{y6}
\tkzDefPointOnLine[pos=0.7](A,D)\tkzGetPoint{y7}
\tkzDefPointOnLine[pos=0.8](A,D)\tkzGetPoint{y8}
\tkzDefPointOnLine[pos=0.9](A,D)\tkzGetPoint{y9}
\tkzDefPointBy[translation= from A to B](y1)\tkzGetPoint{y11}
\tkzDefPointBy[translation= from A to B](y2)\tkzGetPoint{y12}
\tkzDefPointBy[translation= from A to B](y3)\tkzGetPoint{y13}
\tkzDefPointBy[translation= from A to B](y4)\tkzGetPoint{y14}
\tkzDefPointBy[translation= from A to B](y5)\tkzGetPoint{y15}
\tkzDefPointBy[translation= from A to B](y6)\tkzGetPoint{y16}
\tkzDefPointBy[translation= from A to B](y7)\tkzGetPoint{y17}
\tkzDefPointBy[translation= from A to B](y8)\tkzGetPoint{y18}
\tkzDefPointBy[translation= from A to B](y9)\tkzGetPoint{y19}
\tkzDefPointBy[translation= from A to B](D)\tkzGetPoint{C}
\tkzDefPointBy[translation= from A to D](x1)\tkzGetPoint{x11}
\tkzDefPointBy[translation= from A to D](x2)\tkzGetPoint{x12}
\tkzDefPointBy[translation= from A to D](x3)\tkzGetPoint{x13}
\tkzDefPointBy[translation= from A to D](x4)\tkzGetPoint{x14}
\tkzDefPointBy[translation= from A to D](x5)\tkzGetPoint{x15}
\tkzDefPointBy[translation= from A to D](x6)\tkzGetPoint{x16}
\tkzDefPointBy[translation= from A to D](x7)\tkzGetPoint{x17}
\tkzDefPointBy[translation= from A to D](x8)\tkzGetPoint{x18}
\tkzDefPointBy[translation= from A to D](x9)\tkzGetPoint{x19}

\tkzInterLL(x5,x15)(y5,y15)\tkzGetPoint{z}
\tkzInterLL(x1,x11)(y1,y11)\tkzGetPoint{z11}

\tkzDefPointBy[translation= from A to z11](y1)\tkzGetPoint{z12}
\tkzDefPointBy[translation= from A to z11](z12)\tkzGetPoint{z23}
\tkzDefPointBy[translation= from A to z11](z23)\tkzGetPoint{z34}
\tkzDefPointBy[translation= from A to z11](z34)\tkzGetPoint{z45}
\tkzDefPointBy[translation= from A to z11](z45)\tkzGetPoint{z56}
\tkzDefPointBy[translation= from A to z11](z56)\tkzGetPoint{z67}
\tkzDefPointBy[translation= from A to z11](z67)\tkzGetPoint{z78}
\tkzDefPointBy[translation= from A to z11](z78)\tkzGetPoint{z89}
\tkzDefPointBy[translation= from A to z11](x1)\tkzGetPoint{z21}
\tkzDefPointBy[translation= from A to z11](z21)\tkzGetPoint{z32}
\tkzDefPointBy[translation= from A to z11](z32)\tkzGetPoint{z43}
\tkzDefPointBy[translation= from A to z11](z43)\tkzGetPoint{z54}
\tkzDefPointBy[translation= from A to z11](z54)\tkzGetPoint{z65}
\tkzDefPointBy[translation= from A to z11](z65)\tkzGetPoint{z76}
\tkzDefPointBy[translation= from A to z11](z76)\tkzGetPoint{z87}
\tkzDefPointBy[translation= from A to z11](z87)\tkzGetPoint{z98}
\tkzDefPointBy[translation= from A to z11](z11)\tkzGetPoint{z22}
\tkzDefPointBy[translation= from A to z11](z22)\tkzGetPoint{z33}
\tkzDefPointBy[translation= from A to z11](z33)\tkzGetPoint{z44}
\tkzDefPointBy[translation= from A to z11](z44)\tkzGetPoint{z55}
\tkzDefPointBy[translation= from A to z11](z55)\tkzGetPoint{z66}
\tkzDefPointBy[translation= from A to z11](z66)\tkzGetPoint{z77}
\tkzDefPointBy[translation= from A to z11](z77)\tkzGetPoint{z88}
\tkzDefPointBy[translation= from A to z11](z88)\tkzGetPoint{z99}

\tkzInterLL(x2,x12)(y7,y17)\tkzGetPoint{z27}
\tkzInterLL(x7,x17)(y8,y18)\tkzGetPoint{z78}
\tkzInterLL(x2,x12)(y3,y13)\tkzGetPoint{z23}
\tkzInterLL(x7,x17)(y2,y12)\tkzGetPoint{z72}

\tkzInterLL(x1,x11)(y9,y19)\tkzGetPoint{z19}
\tkzInterLL(x2,x12)(y9,y19)\tkzGetPoint{z29}
\tkzInterLL(x3,x13)(y9,y19)\tkzGetPoint{z39}
\tkzInterLL(x4,x14)(y9,y19)\tkzGetPoint{z49}
\tkzInterLL(x5,x15)(y9,y19)\tkzGetPoint{z59}
\tkzInterLL(x6,x16)(y9,y19)\tkzGetPoint{z69}
\tkzInterLL(x7,x17)(y9,y19)\tkzGetPoint{z79}

\tkzInterLL(x9,x19)(y1,y11)\tkzGetPoint{z91}
\tkzInterLL(x9,x19)(y2,y12)\tkzGetPoint{z92}
\tkzInterLL(x9,x19)(y3,y13)\tkzGetPoint{z93}
\tkzInterLL(x9,x19)(y4,y14)\tkzGetPoint{z94}
\tkzInterLL(x9,x19)(y5,y15)\tkzGetPoint{z95}
\tkzInterLL(x9,x19)(y6,y16)\tkzGetPoint{z96}
\tkzInterLL(x9,x19)(y7,y17)\tkzGetPoint{z97}

\tkzDrawPolygon[fill=myred](x5,x9,z95,z55)
\tkzDrawPolygon[fill=mygreen](y9,z49,z44,y4)
\tkzDrawPolygon[fill=myblue](x4,A,y4,z44)
\tkzDrawPolygon[fill=myblue](z59,z99,z95,z55)

\tkzDrawLine[add=0 and 0.1,-Stealth](A, x9)
\tkzDrawLine[add=0 and 0.1,-Stealth](A, y9)

\tkzDrawSegment[line width=0.5pt](y1, z91)
\tkzDrawSegment[line width=0.5pt](y2, z92)
\tkzDrawSegment[line width=0.5pt](y3, z93)
\tkzDrawSegment(y4, z94)
\tkzDrawSegment(y5, z95)
\tkzDrawSegment[line width=0.5pt](y6, z96)
\tkzDrawSegment[line width=0.5pt](y7, z97)
\tkzDrawSegment[line width=0.5pt](y8, z98)
\tkzDrawSegment(y9, z99)

\tkzDrawSegment[line width=0.5pt](x1, z19)
\tkzDrawSegment[line width=0.5pt](x2, z29)
\tkzDrawSegment[line width=0.5pt](x3, z39)
\tkzDrawSegment(x4, z49)
\tkzDrawSegment(x5, z59)
\tkzDrawSegment[line width=0.5pt](x6, z69)
\tkzDrawSegment[line width=0.5pt](x7, z79)
\tkzDrawSegment[line width=0.5pt](x8, z89)
\tkzDrawSegment(x9, z99)

\tkzDrawLine[add=-0.2 and -0.2](x1,y1)
\tkzDrawLine[add=-0.2 and -0.2](A,z11)

\tkzDrawLine[add=-0.2 and -0.2](z12,z21)
\tkzDrawLine[add=-0.2 and -0.2](z11,z22)

\tkzDrawLine[add=-0.2 and -0.2](z23,z32)
\tkzDrawLine[add=-0.2 and -0.2](z22,z33)

\tkzDrawLine[add=-0.2 and -0.2](z34,z43)
\tkzDrawLine[add=-0.2 and -0.2](z33,z44)

\tkzDrawLine[add=-0.2 and -0.2](z45,z54)
\tkzDrawLine[add=-0.2 and -0.2](z44,z55)

\tkzDrawLine[add=-0.2 and -0.2](z56,z65)
\tkzDrawLine[add=-0.2 and -0.2](z55,z66)

\tkzDrawLine[add=-0.2 and -0.2](z67,z76)
\tkzDrawLine[add=-0.2 and -0.2](z66,z77)

\tkzDrawLine[add=-0.2 and -0.2](z78,z87)
\tkzDrawLine[add=-0.2 and -0.2](z77,z88)

\tkzDrawLine[add=-0.2 and -0.2](z89,z98)
\tkzDrawLine[add=-0.2 and -0.2](z88,z99)

\tkzLabelLine[pos=.97,right=-1.0mm](A,D){$j$}
\tkzLabelLine[pos=0.85,left](A,D){$2l\!+\!1$}
\tkzLabelLine[pos=0.55,left](A,D){$l\!+\!2$}
\tkzLabelLine[pos=0.45,left](A,D){$l\!+\!1$}
\tkzLabelLine[pos=0.35,left](A,D){$l$}
\tkzLabelLine[pos=0.05,left](A,D){$1$}

\tkzLabelLine[pos=.96,above](A,B){$i$}
\tkzLabelLine[below=-2mm](x5,x6){$l\!+\!2$}
\tkzLabelLine[below=-2mm](x4,x5){$l\!+\!1$}
\tkzLabelLine[below=0mm](x3,x4){$l$}
\tkzLabelLine[pos=0.05,below](A,B){$1$}
\tkzLabelLine[pos=0.95,below=-3mm](A,x9){$2l\!+\!1$}

\tkzLabelPoint[above left](z27){\Huge$\tilde{\textbf{A}}$}
\tkzLabelPoint[above left](z77){\Huge$\tilde{\textbf{D}}$}
\tkzLabelPoint[above left](z22){\Huge$\tilde{\textbf{C}}$}
\tkzLabelPoint[shift={(-5.5mm,19.7mm)}](z72){\Huge$\tilde{\textbf{B}}$}
\end{tikzpicture}
}
\caption{Odd case. ${\tilde{\text{A}}}\leqslant\tilde{\text{B}}$ because $\tilde{\text{C}}=\tilde{\text{D}}$ and $\tilde{\text{A}}+\tilde{\text{C}}\leqslant\tilde{\text{D}}+\tilde{\text{B}}$}
\label{fig:odd:1}
\end{figure}
--- that is, the number of arcs in the green area in Figure~\ref{fig:odd:1} (denoted by \(\tilde{\text{A}}\)) does not exceed the number of arcs in the red area (denoted by \(\tilde{\text{B}}\)).

\begin{figure}[h]
\centering
\resizebox{5.7cm}{!}{
\begin{tikzpicture}
[scale=1,vect/.style={->,shorten >=1pt,>=latex'}]
\tkzSetUpLine[line width=1pt]
\tkzSetUpCircle[color=black, line width=1pt]
\tkzSetUpPoint[size=4,circle,fill=black!]
\tkzSetUpArc[delta=0,color=black,line width=.75pt]

\tkzDefPoints{0/0/A, 7/0/B}
\tkzDefPointBy[rotation=center A angle 90](B) \tkzGetPoint{D}
\tkzDefPointOnLine[pos=0.1](A,B)\tkzGetPoint{x1}
\tkzDefPointOnLine[pos=0.2](A,B)\tkzGetPoint{x2}
\tkzDefPointOnLine[pos=0.3](A,B)\tkzGetPoint{x3}
\tkzDefPointOnLine[pos=0.4](A,B)\tkzGetPoint{x4}
\tkzDefPointOnLine[pos=0.5](A,B)\tkzGetPoint{x5}
\tkzDefPointOnLine[pos=0.6](A,B)\tkzGetPoint{x6}
\tkzDefPointOnLine[pos=0.7](A,B)\tkzGetPoint{x7}
\tkzDefPointOnLine[pos=0.8](A,B)\tkzGetPoint{x8}
\tkzDefPointOnLine[pos=0.9](A,B)\tkzGetPoint{x9}
\tkzDefPointOnLine[pos=0.1](A,D)\tkzGetPoint{y1}
\tkzDefPointOnLine[pos=0.2](A,D)\tkzGetPoint{y2}
\tkzDefPointOnLine[pos=0.3](A,D)\tkzGetPoint{y3}
\tkzDefPointOnLine[pos=0.4](A,D)\tkzGetPoint{y4}
\tkzDefPointOnLine[pos=0.5](A,D)\tkzGetPoint{y5}
\tkzDefPointOnLine[pos=0.6](A,D)\tkzGetPoint{y6}
\tkzDefPointOnLine[pos=0.7](A,D)\tkzGetPoint{y7}
\tkzDefPointOnLine[pos=0.8](A,D)\tkzGetPoint{y8}
\tkzDefPointOnLine[pos=0.9](A,D)\tkzGetPoint{y9}
\tkzDefPointBy[translation= from A to B](y1)\tkzGetPoint{y11}
\tkzDefPointBy[translation= from A to B](y2)\tkzGetPoint{y12}
\tkzDefPointBy[translation= from A to B](y3)\tkzGetPoint{y13}
\tkzDefPointBy[translation= from A to B](y4)\tkzGetPoint{y14}
\tkzDefPointBy[translation= from A to B](y5)\tkzGetPoint{y15}
\tkzDefPointBy[translation= from A to B](y6)\tkzGetPoint{y16}
\tkzDefPointBy[translation= from A to B](y7)\tkzGetPoint{y17}
\tkzDefPointBy[translation= from A to B](y8)\tkzGetPoint{y18}
\tkzDefPointBy[translation= from A to B](y9)\tkzGetPoint{y19}
\tkzDefPointBy[translation= from A to B](D)\tkzGetPoint{C}
\tkzDefPointBy[translation= from A to D](x1)\tkzGetPoint{x11}
\tkzDefPointBy[translation= from A to D](x2)\tkzGetPoint{x12}
\tkzDefPointBy[translation= from A to D](x3)\tkzGetPoint{x13}
\tkzDefPointBy[translation= from A to D](x4)\tkzGetPoint{x14}
\tkzDefPointBy[translation= from A to D](x5)\tkzGetPoint{x15}
\tkzDefPointBy[translation= from A to D](x6)\tkzGetPoint{x16}
\tkzDefPointBy[translation= from A to D](x7)\tkzGetPoint{x17}
\tkzDefPointBy[translation= from A to D](x8)\tkzGetPoint{x18}
\tkzDefPointBy[translation= from A to D](x9)\tkzGetPoint{x19}

\tkzInterLL(x5,x15)(y5,y15)\tkzGetPoint{z}
\tkzInterLL(x1,x11)(y1,y11)\tkzGetPoint{z11}

\tkzDefPointBy[translation= from A to z11](y1)\tkzGetPoint{z12}
\tkzDefPointBy[translation= from A to z11](z12)\tkzGetPoint{z23}
\tkzDefPointBy[translation= from A to z11](z23)\tkzGetPoint{z34}
\tkzDefPointBy[translation= from A to z11](z34)\tkzGetPoint{z45}
\tkzDefPointBy[translation= from A to z11](z45)\tkzGetPoint{z56}
\tkzDefPointBy[translation= from A to z11](z56)\tkzGetPoint{z67}
\tkzDefPointBy[translation= from A to z11](z67)\tkzGetPoint{z78}
\tkzDefPointBy[translation= from A to z11](z78)\tkzGetPoint{z89}
\tkzDefPointBy[translation= from A to z11](x1)\tkzGetPoint{z21}
\tkzDefPointBy[translation= from A to z11](z21)\tkzGetPoint{z32}
\tkzDefPointBy[translation= from A to z11](z32)\tkzGetPoint{z43}
\tkzDefPointBy[translation= from A to z11](z43)\tkzGetPoint{z54}
\tkzDefPointBy[translation= from A to z11](z54)\tkzGetPoint{z65}
\tkzDefPointBy[translation= from A to z11](z65)\tkzGetPoint{z76}
\tkzDefPointBy[translation= from A to z11](z76)\tkzGetPoint{z87}
\tkzDefPointBy[translation= from A to z11](z87)\tkzGetPoint{z98}
\tkzDefPointBy[translation= from A to z11](z11)\tkzGetPoint{z22}
\tkzDefPointBy[translation= from A to z11](z22)\tkzGetPoint{z33}
\tkzDefPointBy[translation= from A to z11](z33)\tkzGetPoint{z44}
\tkzDefPointBy[translation= from A to z11](z44)\tkzGetPoint{z55}
\tkzDefPointBy[translation= from A to z11](z55)\tkzGetPoint{z66}
\tkzDefPointBy[translation= from A to z11](z66)\tkzGetPoint{z77}
\tkzDefPointBy[translation= from A to z11](z77)\tkzGetPoint{z88}
\tkzDefPointBy[translation= from A to z11](z88)\tkzGetPoint{z99}

\tkzInterLL(x2,x12)(y8,y18)\tkzGetPoint{z28}
\tkzInterLL(x7,x17)(y8,y18)\tkzGetPoint{z78}
\tkzInterLL(x2,x12)(y3,y13)\tkzGetPoint{z23}
\tkzInterLL(x7,x17)(y3,y13)\tkzGetPoint{z73}

\tkzInterLL(x1,x11)(y9,y19)\tkzGetPoint{z19}
\tkzInterLL(x2,x12)(y9,y19)\tkzGetPoint{z29}
\tkzInterLL(x3,x13)(y9,y19)\tkzGetPoint{z39}
\tkzInterLL(x4,x14)(y9,y19)\tkzGetPoint{z49}
\tkzInterLL(x5,x15)(y9,y19)\tkzGetPoint{z59}
\tkzInterLL(x6,x16)(y9,y19)\tkzGetPoint{z69}
\tkzInterLL(x7,x17)(y9,y19)\tkzGetPoint{z79}

\tkzInterLL(x9,x19)(y1,y11)\tkzGetPoint{z91}
\tkzInterLL(x9,x19)(y2,y12)\tkzGetPoint{z92}
\tkzInterLL(x9,x19)(y3,y13)\tkzGetPoint{z93}
\tkzInterLL(x9,x19)(y4,y14)\tkzGetPoint{z94}
\tkzInterLL(x9,x19)(y5,y15)\tkzGetPoint{z95}
\tkzInterLL(x9,x19)(y6,y16)\tkzGetPoint{z96}
\tkzInterLL(x9,x19)(y7,y17)\tkzGetPoint{z97}


\tkzDrawPolygon[fill=mysand](z59,z55,z45,z44,y4,y9)

\tkzDrawLine[add=0 and 0.1,-Stealth](A, x9)
\tkzDrawLine[add=0 and 0.1,-Stealth](A, y9)

\tkzDrawSegment[line width=0.5pt](y1, z91)
\tkzDrawSegment[line width=0.5pt](y2, z92)
\tkzDrawSegment[line width=0.5pt](y3, z93)
\tkzDrawSegment(y4, z94)
\tkzDrawSegment(y5, z95)
\tkzDrawSegment[line width=0.5pt](y6, z96)
\tkzDrawSegment[line width=0.5pt](y7, z97)
\tkzDrawSegment[line width=0.5pt](y8, z98)
\tkzDrawSegment(y9, z99)

\tkzDrawSegment[line width=0.5pt](x1, z19)
\tkzDrawSegment[line width=0.5pt](x2, z29)
\tkzDrawSegment[line width=0.5pt](x3, z39)
\tkzDrawSegment(x4, z49)
\tkzDrawSegment(x5, z59)
\tkzDrawSegment[line width=0.5pt](x6, z69)
\tkzDrawSegment[line width=0.5pt](x7, z79)
\tkzDrawSegment[line width=0.5pt](x8, z89)
\tkzDrawSegment(x9, z99)

\tkzDrawLine[add=-0.2 and -0.2](x1,y1)
\tkzDrawLine[add=-0.2 and -0.2](A,z11)

\tkzDrawLine[add=-0.2 and -0.2](z12,z21)
\tkzDrawLine[add=-0.2 and -0.2](z11,z22)

\tkzDrawLine[add=-0.2 and -0.2](z23,z32)
\tkzDrawLine[add=-0.2 and -0.2](z22,z33)

\tkzDrawLine[add=-0.2 and -0.2](z34,z43)
\tkzDrawLine[add=-0.2 and -0.2](z33,z44)

\tkzDrawLine[add=-0.2 and -0.2](z45,z54)
\tkzDrawLine[add=-0.2 and -0.2](z44,z55)

\tkzDrawLine[add=-0.2 and -0.2](z56,z65)
\tkzDrawLine[add=-0.2 and -0.2](z55,z66)

\tkzDrawLine[add=-0.2 and -0.2](z67,z76)
\tkzDrawLine[add=-0.2 and -0.2](z66,z77)

\tkzDrawLine[add=-0.2 and -0.2](z78,z87)
\tkzDrawLine[add=-0.2 and -0.2](z77,z88)

\tkzDrawLine[add=-0.2 and -0.2](z89,z98)
\tkzDrawLine[add=-0.2 and -0.2](z88,z99)

\tkzLabelLine[pos=.97,right=-1.0mm](A,D){$j$}
\tkzLabelLine[pos=0.85,left](A,D){$2l\!+\!1$}
\tkzLabelLine[pos=0.55,left](A,D){$l\!+\!2$}
\tkzLabelLine[pos=0.45,left](A,D){$l\!+\!1$}
\tkzLabelLine[pos=0.35,left](A,D){$l$}
\tkzLabelLine[pos=0.05,left](A,D){$1$}

\tkzLabelLine[pos=.96,above](A,B){$i$}
\tkzLabelLine[below=-2mm](x5,x6){$l\!+\!2$}
\tkzLabelLine[below=-2mm](x4,x5){$l\!+\!1$}
\tkzLabelLine[below=0mm](x3,x4){$l$}
\tkzLabelLine[pos=0.05,below](A,B){$1$}
\tkzLabelLine[pos=0.95,below=-3mm](A,x9){$2l\!+\!1$}

\tkzLabelPoint[above left](z27){\Huge$\tilde{\textbf{G}}$}
\end{tikzpicture}
} 
\resizebox{5.7cm}{!}{
\begin{tikzpicture}
[scale=1,vect/.style={->,shorten >=1pt,>=latex'}]
\tkzSetUpLine[line width=1pt]
\tkzSetUpCircle[color=black, line width=1pt]
\tkzSetUpPoint[size=4,circle,fill=black!]
\tkzSetUpArc[delta=0,color=black,line width=.75pt]

\tkzDefPoints{0/0/A, 7/0/B}
\tkzDefPointBy[rotation=center A angle 90](B) \tkzGetPoint{D}
\tkzDefPointOnLine[pos=0.1](A,B)\tkzGetPoint{x1}
\tkzDefPointOnLine[pos=0.2](A,B)\tkzGetPoint{x2}
\tkzDefPointOnLine[pos=0.3](A,B)\tkzGetPoint{x3}
\tkzDefPointOnLine[pos=0.4](A,B)\tkzGetPoint{x4}
\tkzDefPointOnLine[pos=0.5](A,B)\tkzGetPoint{x5}
\tkzDefPointOnLine[pos=0.6](A,B)\tkzGetPoint{x6}
\tkzDefPointOnLine[pos=0.7](A,B)\tkzGetPoint{x7}
\tkzDefPointOnLine[pos=0.8](A,B)\tkzGetPoint{x8}
\tkzDefPointOnLine[pos=0.9](A,B)\tkzGetPoint{x9}
\tkzDefPointOnLine[pos=0.1](A,D)\tkzGetPoint{y1}
\tkzDefPointOnLine[pos=0.2](A,D)\tkzGetPoint{y2}
\tkzDefPointOnLine[pos=0.3](A,D)\tkzGetPoint{y3}
\tkzDefPointOnLine[pos=0.4](A,D)\tkzGetPoint{y4}
\tkzDefPointOnLine[pos=0.5](A,D)\tkzGetPoint{y5}
\tkzDefPointOnLine[pos=0.6](A,D)\tkzGetPoint{y6}
\tkzDefPointOnLine[pos=0.7](A,D)\tkzGetPoint{y7}
\tkzDefPointOnLine[pos=0.8](A,D)\tkzGetPoint{y8}
\tkzDefPointOnLine[pos=0.9](A,D)\tkzGetPoint{y9}
\tkzDefPointBy[translation= from A to B](y1)\tkzGetPoint{y11}
\tkzDefPointBy[translation= from A to B](y2)\tkzGetPoint{y12}
\tkzDefPointBy[translation= from A to B](y3)\tkzGetPoint{y13}
\tkzDefPointBy[translation= from A to B](y4)\tkzGetPoint{y14}
\tkzDefPointBy[translation= from A to B](y5)\tkzGetPoint{y15}
\tkzDefPointBy[translation= from A to B](y6)\tkzGetPoint{y16}
\tkzDefPointBy[translation= from A to B](y7)\tkzGetPoint{y17}
\tkzDefPointBy[translation= from A to B](y8)\tkzGetPoint{y18}
\tkzDefPointBy[translation= from A to B](y9)\tkzGetPoint{y19}
\tkzDefPointBy[translation= from A to B](D)\tkzGetPoint{C}
\tkzDefPointBy[translation= from A to D](x1)\tkzGetPoint{x11}
\tkzDefPointBy[translation= from A to D](x2)\tkzGetPoint{x12}
\tkzDefPointBy[translation= from A to D](x3)\tkzGetPoint{x13}
\tkzDefPointBy[translation= from A to D](x4)\tkzGetPoint{x14}
\tkzDefPointBy[translation= from A to D](x5)\tkzGetPoint{x15}
\tkzDefPointBy[translation= from A to D](x6)\tkzGetPoint{x16}
\tkzDefPointBy[translation= from A to D](x7)\tkzGetPoint{x17}
\tkzDefPointBy[translation= from A to D](x8)\tkzGetPoint{x18}
\tkzDefPointBy[translation= from A to D](x9)\tkzGetPoint{x19}

\tkzInterLL(x5,x15)(y5,y15)\tkzGetPoint{z}
\tkzInterLL(x1,x11)(y1,y11)\tkzGetPoint{z11}

\tkzDefPointBy[translation= from A to z11](y1)\tkzGetPoint{z12}
\tkzDefPointBy[translation= from A to z11](z12)\tkzGetPoint{z23}
\tkzDefPointBy[translation= from A to z11](z23)\tkzGetPoint{z34}
\tkzDefPointBy[translation= from A to z11](z34)\tkzGetPoint{z45}
\tkzDefPointBy[translation= from A to z11](z45)\tkzGetPoint{z56}
\tkzDefPointBy[translation= from A to z11](z56)\tkzGetPoint{z67}
\tkzDefPointBy[translation= from A to z11](z67)\tkzGetPoint{z78}
\tkzDefPointBy[translation= from A to z11](z78)\tkzGetPoint{z89}
\tkzDefPointBy[translation= from A to z11](x1)\tkzGetPoint{z21}
\tkzDefPointBy[translation= from A to z11](z21)\tkzGetPoint{z32}
\tkzDefPointBy[translation= from A to z11](z32)\tkzGetPoint{z43}
\tkzDefPointBy[translation= from A to z11](z43)\tkzGetPoint{z54}
\tkzDefPointBy[translation= from A to z11](z54)\tkzGetPoint{z65}
\tkzDefPointBy[translation= from A to z11](z65)\tkzGetPoint{z76}
\tkzDefPointBy[translation= from A to z11](z76)\tkzGetPoint{z87}
\tkzDefPointBy[translation= from A to z11](z87)\tkzGetPoint{z98}
\tkzDefPointBy[translation= from A to z11](z11)\tkzGetPoint{z22}
\tkzDefPointBy[translation= from A to z11](z22)\tkzGetPoint{z33}
\tkzDefPointBy[translation= from A to z11](z33)\tkzGetPoint{z44}
\tkzDefPointBy[translation= from A to z11](z44)\tkzGetPoint{z55}
\tkzDefPointBy[translation= from A to z11](z55)\tkzGetPoint{z66}
\tkzDefPointBy[translation= from A to z11](z66)\tkzGetPoint{z77}
\tkzDefPointBy[translation= from A to z11](z77)\tkzGetPoint{z88}
\tkzDefPointBy[translation= from A to z11](z88)\tkzGetPoint{z99}

\tkzInterLL(x2,x12)(y8,y18)\tkzGetPoint{z28}
\tkzInterLL(x7,x17)(y8,y18)\tkzGetPoint{z78}
\tkzInterLL(x2,x12)(y3,y13)\tkzGetPoint{z23}
\tkzInterLL(x7,x17)(y3,y13)\tkzGetPoint{z73}

\tkzInterLL(x1,x11)(y9,y19)\tkzGetPoint{z19}
\tkzInterLL(x2,x12)(y9,y19)\tkzGetPoint{z29}
\tkzInterLL(x3,x13)(y9,y19)\tkzGetPoint{z39}
\tkzInterLL(x4,x14)(y9,y19)\tkzGetPoint{z49}
\tkzInterLL(x5,x15)(y9,y19)\tkzGetPoint{z59}
\tkzInterLL(x6,x16)(y9,y19)\tkzGetPoint{z69}
\tkzInterLL(x7,x17)(y9,y19)\tkzGetPoint{z79}

\tkzInterLL(x9,x19)(y1,y11)\tkzGetPoint{z91}
\tkzInterLL(x9,x19)(y2,y12)\tkzGetPoint{z92}
\tkzInterLL(x9,x19)(y3,y13)\tkzGetPoint{z93}
\tkzInterLL(x9,x19)(y4,y14)\tkzGetPoint{z94}
\tkzInterLL(x9,x19)(y5,y15)\tkzGetPoint{z95}
\tkzInterLL(x9,x19)(y6,y16)\tkzGetPoint{z96}
\tkzInterLL(x9,x19)(y7,y17)\tkzGetPoint{z97}

\tkzDrawPolygon[fill=myred](x5,x9,z95,z55)
\tkzDrawPolygon[fill=myred](y9,z59,z55,y5)
\tkzDrawPolygon[fill=myred!50](y4,z44,z45,y5)

\tkzDrawLine[add=0 and 0.1,-Stealth](A, x9)
\tkzDrawLine[add=0 and 0.1,-Stealth](A, y9)

\tkzDrawSegment[line width=0.5pt](y1, z91)
\tkzDrawSegment[line width=0.5pt](y2, z92)
\tkzDrawSegment[line width=0.5pt](y3, z93)
\tkzDrawSegment(y4, z94)
\tkzDrawSegment(y5, z95)
\tkzDrawSegment[line width=0.5pt](y6, z96)
\tkzDrawSegment[line width=0.5pt](y7, z97)
\tkzDrawSegment[line width=0.5pt](y8, z98)
\tkzDrawSegment(y9, z99)

\tkzDrawSegment[line width=0.5pt](x1, z19)
\tkzDrawSegment[line width=0.5pt](x2, z29)
\tkzDrawSegment[line width=0.5pt](x3, z39)
\tkzDrawSegment(x4, z49)
\tkzDrawSegment(x5, z59)
\tkzDrawSegment[line width=0.5pt](x6, z69)
\tkzDrawSegment[line width=0.5pt](x7, z79)
\tkzDrawSegment[line width=0.5pt](x8, z89)
\tkzDrawSegment(x9, z99)

\tkzDrawLine[add=-0.2 and -0.2](x1,y1)
\tkzDrawLine[add=-0.2 and -0.2](A,z11)

\tkzDrawLine[add=-0.2 and -0.2](z12,z21)
\tkzDrawLine[add=-0.2 and -0.2](z11,z22)

\tkzDrawLine[add=-0.2 and -0.2](z23,z32)
\tkzDrawLine[add=-0.2 and -0.2](z22,z33)

\tkzDrawLine[add=-0.2 and -0.2](z34,z43)
\tkzDrawLine[add=-0.2 and -0.2](z33,z44)

\tkzDrawLine[add=-0.2 and -0.2](z45,z54)
\tkzDrawLine[add=-0.2 and -0.2](z44,z55)

\tkzDrawLine[add=-0.2 and -0.2](z56,z65)
\tkzDrawLine[add=-0.2 and -0.2](z55,z66)

\tkzDrawLine[add=-0.2 and -0.2](z67,z76)
\tkzDrawLine[add=-0.2 and -0.2](z66,z77)

\tkzDrawLine[add=-0.2 and -0.2](z78,z87)
\tkzDrawLine[add=-0.2 and -0.2](z77,z88)

\tkzDrawLine[add=-0.2 and -0.2](z89,z98)
\tkzDrawLine[add=-0.2 and -0.2](z88,z99)

\tkzLabelLine[pos=.97,right=-1.0mm](A,D){$j$}
\tkzLabelLine[pos=0.85,left](A,D){$2l\!+\!1$}
\tkzLabelLine[pos=0.55,left](A,D){$l\!+\!2$}
\tkzLabelLine[pos=0.45,left](A,D){$l\!+\!1$}
\tkzLabelLine[pos=0.35,left](A,D){$l$}
\tkzLabelLine[pos=0.05,left](A,D){$1$}

\tkzLabelLine[pos=.96,above](A,B){$i$}
\tkzLabelLine[below=-2mm](x5,x6){$l\!+\!2$}
\tkzLabelLine[below=-2mm](x4,x5){$l\!+\!1$}
\tkzLabelLine[below=0mm](x3,x4){$l$}
\tkzLabelLine[pos=0.05,below](A,B){$1$}
\tkzLabelLine[pos=0.95,below=-3mm](A,x9){$2l\!+\!1$}

\tkzLabelPoint[above left](z27){\Huge$\tilde{\textbf{H}}$}
\tkzLabelPoint[shift={(-11mm,8.4mm)}](z34){\Large$\tilde{\textbf{I}}$}
\tkzLabelPoint[shift={(-5.5mm,19.7mm)}](z72){\Huge$\tilde{\textbf{B}}$}
\end{tikzpicture}
} 
\resizebox{5.7cm}{!}{
\begin{tikzpicture}
[scale=1,vect/.style={->,shorten >=1pt,>=latex'}]
\tkzSetUpLine[line width=1pt]
\tkzSetUpCircle[color=black, line width=1pt]
\tkzSetUpPoint[size=4,circle,fill=black!]
\tkzSetUpArc[delta=0,color=black,line width=.75pt]

\tkzDefPoints{0/0/A, 7/0/B}
\tkzDefPointBy[rotation=center A angle 90](B) \tkzGetPoint{D}
\tkzDefPointOnLine[pos=0.1](A,B)\tkzGetPoint{x1}
\tkzDefPointOnLine[pos=0.2](A,B)\tkzGetPoint{x2}
\tkzDefPointOnLine[pos=0.3](A,B)\tkzGetPoint{x3}
\tkzDefPointOnLine[pos=0.4](A,B)\tkzGetPoint{x4}
\tkzDefPointOnLine[pos=0.5](A,B)\tkzGetPoint{x5}
\tkzDefPointOnLine[pos=0.6](A,B)\tkzGetPoint{x6}
\tkzDefPointOnLine[pos=0.7](A,B)\tkzGetPoint{x7}
\tkzDefPointOnLine[pos=0.8](A,B)\tkzGetPoint{x8}
\tkzDefPointOnLine[pos=0.9](A,B)\tkzGetPoint{x9}
\tkzDefPointOnLine[pos=0.1](A,D)\tkzGetPoint{y1}
\tkzDefPointOnLine[pos=0.2](A,D)\tkzGetPoint{y2}
\tkzDefPointOnLine[pos=0.3](A,D)\tkzGetPoint{y3}
\tkzDefPointOnLine[pos=0.4](A,D)\tkzGetPoint{y4}
\tkzDefPointOnLine[pos=0.5](A,D)\tkzGetPoint{y5}
\tkzDefPointOnLine[pos=0.6](A,D)\tkzGetPoint{y6}
\tkzDefPointOnLine[pos=0.7](A,D)\tkzGetPoint{y7}
\tkzDefPointOnLine[pos=0.8](A,D)\tkzGetPoint{y8}
\tkzDefPointOnLine[pos=0.9](A,D)\tkzGetPoint{y9}
\tkzDefPointBy[translation= from A to B](y1)\tkzGetPoint{y11}
\tkzDefPointBy[translation= from A to B](y2)\tkzGetPoint{y12}
\tkzDefPointBy[translation= from A to B](y3)\tkzGetPoint{y13}
\tkzDefPointBy[translation= from A to B](y4)\tkzGetPoint{y14}
\tkzDefPointBy[translation= from A to B](y5)\tkzGetPoint{y15}
\tkzDefPointBy[translation= from A to B](y6)\tkzGetPoint{y16}
\tkzDefPointBy[translation= from A to B](y7)\tkzGetPoint{y17}
\tkzDefPointBy[translation= from A to B](y8)\tkzGetPoint{y18}
\tkzDefPointBy[translation= from A to B](y9)\tkzGetPoint{y19}
\tkzDefPointBy[translation= from A to B](D)\tkzGetPoint{C}
\tkzDefPointBy[translation= from A to D](x1)\tkzGetPoint{x11}
\tkzDefPointBy[translation= from A to D](x2)\tkzGetPoint{x12}
\tkzDefPointBy[translation= from A to D](x3)\tkzGetPoint{x13}
\tkzDefPointBy[translation= from A to D](x4)\tkzGetPoint{x14}
\tkzDefPointBy[translation= from A to D](x5)\tkzGetPoint{x15}
\tkzDefPointBy[translation= from A to D](x6)\tkzGetPoint{x16}
\tkzDefPointBy[translation= from A to D](x7)\tkzGetPoint{x17}
\tkzDefPointBy[translation= from A to D](x8)\tkzGetPoint{x18}
\tkzDefPointBy[translation= from A to D](x9)\tkzGetPoint{x19}

\tkzInterLL(x5,x15)(y5,y15)\tkzGetPoint{z}
\tkzInterLL(x1,x11)(y1,y11)\tkzGetPoint{z11}

\tkzDefPointBy[translation= from A to z11](y1)\tkzGetPoint{z12}
\tkzDefPointBy[translation= from A to z11](z12)\tkzGetPoint{z23}
\tkzDefPointBy[translation= from A to z11](z23)\tkzGetPoint{z34}
\tkzDefPointBy[translation= from A to z11](z34)\tkzGetPoint{z45}
\tkzDefPointBy[translation= from A to z11](z45)\tkzGetPoint{z56}
\tkzDefPointBy[translation= from A to z11](z56)\tkzGetPoint{z67}
\tkzDefPointBy[translation= from A to z11](z67)\tkzGetPoint{z78}
\tkzDefPointBy[translation= from A to z11](z78)\tkzGetPoint{z89}
\tkzDefPointBy[translation= from A to z11](x1)\tkzGetPoint{z21}
\tkzDefPointBy[translation= from A to z11](z21)\tkzGetPoint{z32}
\tkzDefPointBy[translation= from A to z11](z32)\tkzGetPoint{z43}
\tkzDefPointBy[translation= from A to z11](z43)\tkzGetPoint{z54}
\tkzDefPointBy[translation= from A to z11](z54)\tkzGetPoint{z65}
\tkzDefPointBy[translation= from A to z11](z65)\tkzGetPoint{z76}
\tkzDefPointBy[translation= from A to z11](z76)\tkzGetPoint{z87}
\tkzDefPointBy[translation= from A to z11](z87)\tkzGetPoint{z98}
\tkzDefPointBy[translation= from A to z11](z11)\tkzGetPoint{z22}
\tkzDefPointBy[translation= from A to z11](z22)\tkzGetPoint{z33}
\tkzDefPointBy[translation= from A to z11](z33)\tkzGetPoint{z44}
\tkzDefPointBy[translation= from A to z11](z44)\tkzGetPoint{z55}
\tkzDefPointBy[translation= from A to z11](z55)\tkzGetPoint{z66}
\tkzDefPointBy[translation= from A to z11](z66)\tkzGetPoint{z77}
\tkzDefPointBy[translation= from A to z11](z77)\tkzGetPoint{z88}
\tkzDefPointBy[translation= from A to z11](z88)\tkzGetPoint{z99}

\tkzInterLL(x2,x12)(y8,y18)\tkzGetPoint{z28}
\tkzInterLL(x7,x17)(y8,y18)\tkzGetPoint{z78}
\tkzInterLL(x2,x12)(y3,y13)\tkzGetPoint{z23}
\tkzInterLL(x7,x17)(y3,y13)\tkzGetPoint{z73}

\tkzInterLL(x1,x11)(y9,y19)\tkzGetPoint{z19}
\tkzInterLL(x2,x12)(y9,y19)\tkzGetPoint{z29}
\tkzInterLL(x3,x13)(y9,y19)\tkzGetPoint{z39}
\tkzInterLL(x4,x14)(y9,y19)\tkzGetPoint{z49}
\tkzInterLL(x5,x15)(y9,y19)\tkzGetPoint{z59}
\tkzInterLL(x6,x16)(y9,y19)\tkzGetPoint{z69}
\tkzInterLL(x7,x17)(y9,y19)\tkzGetPoint{z79}

\tkzInterLL(x9,x19)(y1,y11)\tkzGetPoint{z91}
\tkzInterLL(x9,x19)(y2,y12)\tkzGetPoint{z92}
\tkzInterLL(x9,x19)(y3,y13)\tkzGetPoint{z93}
\tkzInterLL(x9,x19)(y4,y14)\tkzGetPoint{z94}
\tkzInterLL(x9,x19)(y5,y15)\tkzGetPoint{z95}
\tkzInterLL(x9,x19)(y6,y16)\tkzGetPoint{z96}
\tkzInterLL(x9,x19)(y7,y17)\tkzGetPoint{z97}

\tkzDrawPolygon[fill=mygreen](y9,z49,z44,y4)
\tkzDrawPolygon[fill=mygreen!50](z45,z55,z59,z49)

\tkzDrawLine[add=0 and 0.1,-Stealth](A, x9)
\tkzDrawLine[add=0 and 0.1,-Stealth](A, y9)

\tkzDrawSegment[line width=0.5pt](y1, z91)
\tkzDrawSegment[line width=0.5pt](y2, z92)
\tkzDrawSegment[line width=0.5pt](y3, z93)
\tkzDrawSegment(y4, z94)
\tkzDrawSegment(y5, z95)
\tkzDrawSegment[line width=0.5pt](y6, z96)
\tkzDrawSegment[line width=0.5pt](y7, z97)
\tkzDrawSegment[line width=0.5pt](y8, z98)
\tkzDrawSegment(y9, z99)

\tkzDrawSegment[line width=0.5pt](x1, z19)
\tkzDrawSegment[line width=0.5pt](x2, z29)
\tkzDrawSegment[line width=0.5pt](x3, z39)
\tkzDrawSegment(x4, z49)
\tkzDrawSegment(x5, z59)
\tkzDrawSegment[line width=0.5pt](x6, z69)
\tkzDrawSegment[line width=0.5pt](x7, z79)
\tkzDrawSegment[line width=0.5pt](x8, z89)
\tkzDrawSegment(x9, z99)

\tkzDrawLine[add=-0.2 and -0.2](x1,y1)
\tkzDrawLine[add=-0.2 and -0.2](A,z11)

\tkzDrawLine[add=-0.2 and -0.2](z12,z21)
\tkzDrawLine[add=-0.2 and -0.2](z11,z22)

\tkzDrawLine[add=-0.2 and -0.2](z23,z32)
\tkzDrawLine[add=-0.2 and -0.2](z22,z33)

\tkzDrawLine[add=-0.2 and -0.2](z34,z43)
\tkzDrawLine[add=-0.2 and -0.2](z33,z44)

\tkzDrawLine[add=-0.2 and -0.2](z45,z54)
\tkzDrawLine[add=-0.2 and -0.2](z44,z55)

\tkzDrawLine[add=-0.2 and -0.2](z56,z65)
\tkzDrawLine[add=-0.2 and -0.2](z55,z66)

\tkzDrawLine[add=-0.2 and -0.2](z67,z76)
\tkzDrawLine[add=-0.2 and -0.2](z66,z77)

\tkzDrawLine[add=-0.2 and -0.2](z78,z87)
\tkzDrawLine[add=-0.2 and -0.2](z77,z88)

\tkzDrawLine[add=-0.2 and -0.2](z89,z98)
\tkzDrawLine[add=-0.2 and -0.2](z88,z99)

\tkzLabelLine[pos=.97,right=-1.0mm](A,D){$j$}
\tkzLabelLine[pos=0.85,left](A,D){$2l\!+\!1$}
\tkzLabelLine[pos=0.55,left](A,D){$l\!+\!2$}
\tkzLabelLine[pos=0.45,left](A,D){$l\!+\!1$}
\tkzLabelLine[pos=0.35,left](A,D){$l$}
\tkzLabelLine[pos=0.05,left](A,D){$1$}

\tkzLabelLine[pos=.96,above](A,B){$i$}
\tkzLabelLine[below=-2mm](x5,x6){$l\!+\!2$}
\tkzLabelLine[below=-2mm](x4,x5){$l\!+\!1$}
\tkzLabelLine[below=0mm](x3,x4){$l$}
\tkzLabelLine[pos=0.05,below](A,B){$1$}
\tkzLabelLine[pos=0.95,below=-3mm](A,x9){$2l\!+\!1$}

\tkzLabelPoint[above left](z27){\Huge$\tilde{\textbf{A}}$}
\tkzLabelPoint[shift={(-4.3mm,16mm)}](z56){\Large$\tilde{\textbf{J}}$}
\end{tikzpicture}
}
\caption{Odd case. $2\tilde{\text{G}}=\tilde{\text{I}}+\tilde{\text{H}}+\tilde{\text{A}}+\tilde{\text{J}}
\leqslant{l}+(\tilde{\text{H}}+\tilde{\text{B}})+{l}={l} + ({l}\,{+}\,{1}){l} + {l}$}
\label{fig:odd:2}
\end{figure}

Now, using (\ref{Lemma.3.odd'}), we can write (see Figure~\ref{fig:odd:2})
\begin{align*}
2\,&|\{\la{i},{j}\ra\in{E}^{<}:\:{i}\leq{l}\!+\!1,\:{j}>{l}\}|\ =\  
2\,|\{\la{i},{j}\ra\in{E}:{i}\leq{l}\!+\!1,\:{j}>{l}\}|\ =\\
=\ & |\{\la{i},{j}\ra\in{E}:{i}\leq{l}\!+\!1,\:{j}={l}\!+\!1\}|+
|\{\la{i},{j}\ra\in{E}:{i}\leq{l}\!+\!1,\:{j}>{l}\!+\!1\}|\ +\\
+\ &|\{\la{i},{j}\ra\in{E}:{i}\leq{l},\:{j}>{l}\}| + 
|\{\la{i},{j}\ra\in{E}:{i}={l}\!+\!1,\:{j}>{l}\}|\ \leq \\
\leq\ &{l} + |\{\la{i},{j}\ra\in{E}:{i}\leq{l}\!+\!1,\:{j}>{l}\!+\!1\}| + 
|\{\la{i},{j}\ra\in{E}:{j}\leq{l}\!+\!1,\:{i}>{l}\!+\!1\}| + {l}\ =\\
=\ &{l} + |\{\la{i},{j}\ra\in{E}:{i}\leq{l}\!+\!1,\:{j}>{l}\!+\!1\}| + 
|\{\la{j},{i}\ra\in{E}:{i}\leq{l}\!+\!1,\:{j}>{l}\!+\!1\}| + {l}\ =\\
=\ &{l} + ({l}+1){l} + {l}\ =\ {l}({l}+3),
\end{align*}
that is, 
\begin{equation}\label{Lemma.4.odd''}\textstyle
    |\{\la{i},{j}\ra\in{E}^{<},\:{i}\leq{l}\!+\!1,\:{j}>{l}\}|\ \leq\ \frac{{l}({l}+3)}{2}.
\end{equation}

\begin{figure}[h]
\centering
\resizebox{6cm}{!}{
\begin{tikzpicture}
[scale=1,vect/.style={->,shorten >=1pt,>=latex'}]
\tkzSetUpLine[line width=1pt]
\tkzSetUpCircle[color=black, line width=1pt]
\tkzSetUpPoint[size=4,circle,fill=black!]
\tkzSetUpArc[delta=0,color=black,line width=.75pt]

\tkzDefPoints{0/0/A, 7/0/B}
\tkzDefPointBy[rotation=center A angle 90](B) \tkzGetPoint{D}
\tkzDefPointOnLine[pos=0.1](A,B)\tkzGetPoint{x1}
\tkzDefPointOnLine[pos=0.2](A,B)\tkzGetPoint{x2}
\tkzDefPointOnLine[pos=0.3](A,B)\tkzGetPoint{x3}
\tkzDefPointOnLine[pos=0.4](A,B)\tkzGetPoint{x4}
\tkzDefPointOnLine[pos=0.5](A,B)\tkzGetPoint{x5}
\tkzDefPointOnLine[pos=0.6](A,B)\tkzGetPoint{x6}
\tkzDefPointOnLine[pos=0.7](A,B)\tkzGetPoint{x7}
\tkzDefPointOnLine[pos=0.8](A,B)\tkzGetPoint{x8}
\tkzDefPointOnLine[pos=0.9](A,B)\tkzGetPoint{x9}
\tkzDefPointOnLine[pos=0.1](A,D)\tkzGetPoint{y1}
\tkzDefPointOnLine[pos=0.2](A,D)\tkzGetPoint{y2}
\tkzDefPointOnLine[pos=0.3](A,D)\tkzGetPoint{y3}
\tkzDefPointOnLine[pos=0.4](A,D)\tkzGetPoint{y4}
\tkzDefPointOnLine[pos=0.5](A,D)\tkzGetPoint{y5}
\tkzDefPointOnLine[pos=0.6](A,D)\tkzGetPoint{y6}
\tkzDefPointOnLine[pos=0.7](A,D)\tkzGetPoint{y7}
\tkzDefPointOnLine[pos=0.8](A,D)\tkzGetPoint{y8}
\tkzDefPointOnLine[pos=0.9](A,D)\tkzGetPoint{y9}
\tkzDefPointBy[translation= from A to B](y1)\tkzGetPoint{y11}
\tkzDefPointBy[translation= from A to B](y2)\tkzGetPoint{y12}
\tkzDefPointBy[translation= from A to B](y3)\tkzGetPoint{y13}
\tkzDefPointBy[translation= from A to B](y4)\tkzGetPoint{y14}
\tkzDefPointBy[translation= from A to B](y5)\tkzGetPoint{y15}
\tkzDefPointBy[translation= from A to B](y6)\tkzGetPoint{y16}
\tkzDefPointBy[translation= from A to B](y7)\tkzGetPoint{y17}
\tkzDefPointBy[translation= from A to B](y8)\tkzGetPoint{y18}
\tkzDefPointBy[translation= from A to B](y9)\tkzGetPoint{y19}
\tkzDefPointBy[translation= from A to B](D)\tkzGetPoint{C}
\tkzDefPointBy[translation= from A to D](x1)\tkzGetPoint{x11}
\tkzDefPointBy[translation= from A to D](x2)\tkzGetPoint{x12}
\tkzDefPointBy[translation= from A to D](x3)\tkzGetPoint{x13}
\tkzDefPointBy[translation= from A to D](x4)\tkzGetPoint{x14}
\tkzDefPointBy[translation= from A to D](x5)\tkzGetPoint{x15}
\tkzDefPointBy[translation= from A to D](x6)\tkzGetPoint{x16}
\tkzDefPointBy[translation= from A to D](x7)\tkzGetPoint{x17}
\tkzDefPointBy[translation= from A to D](x8)\tkzGetPoint{x18}
\tkzDefPointBy[translation= from A to D](x9)\tkzGetPoint{x19}

\tkzInterLL(x5,x15)(y5,y15)\tkzGetPoint{z}
\tkzInterLL(x1,x11)(y1,y11)\tkzGetPoint{z11}

\tkzDefPointBy[translation= from A to z11](y1)\tkzGetPoint{z12}
\tkzDefPointBy[translation= from A to z11](z12)\tkzGetPoint{z23}
\tkzDefPointBy[translation= from A to z11](z23)\tkzGetPoint{z34}
\tkzDefPointBy[translation= from A to z11](z34)\tkzGetPoint{z45}
\tkzDefPointBy[translation= from A to z11](z45)\tkzGetPoint{z56}
\tkzDefPointBy[translation= from A to z11](z56)\tkzGetPoint{z67}
\tkzDefPointBy[translation= from A to z11](z67)\tkzGetPoint{z78}
\tkzDefPointBy[translation= from A to z11](z78)\tkzGetPoint{z89}
\tkzDefPointBy[translation= from A to z11](x1)\tkzGetPoint{z21}
\tkzDefPointBy[translation= from A to z11](z21)\tkzGetPoint{z32}
\tkzDefPointBy[translation= from A to z11](z32)\tkzGetPoint{z43}
\tkzDefPointBy[translation= from A to z11](z43)\tkzGetPoint{z54}
\tkzDefPointBy[translation= from A to z11](z54)\tkzGetPoint{z65}
\tkzDefPointBy[translation= from A to z11](z65)\tkzGetPoint{z76}
\tkzDefPointBy[translation= from A to z11](z76)\tkzGetPoint{z87}
\tkzDefPointBy[translation= from A to z11](z87)\tkzGetPoint{z98}
\tkzDefPointBy[translation= from A to z11](z11)\tkzGetPoint{z22}
\tkzDefPointBy[translation= from A to z11](z22)\tkzGetPoint{z33}
\tkzDefPointBy[translation= from A to z11](z33)\tkzGetPoint{z44}
\tkzDefPointBy[translation= from A to z11](z44)\tkzGetPoint{z55}
\tkzDefPointBy[translation= from A to z11](z55)\tkzGetPoint{z66}
\tkzDefPointBy[translation= from A to z11](z66)\tkzGetPoint{z77}
\tkzDefPointBy[translation= from A to z11](z77)\tkzGetPoint{z88}
\tkzDefPointBy[translation= from A to z11](z88)\tkzGetPoint{z99}

\tkzInterLL(x2,x12)(y8,y18)\tkzGetPoint{z28}
\tkzInterLL(x7,x17)(y8,y18)\tkzGetPoint{z78}
\tkzInterLL(x2,x12)(y3,y13)\tkzGetPoint{z23}
\tkzInterLL(x7,x17)(y3,y13)\tkzGetPoint{z73}

\tkzInterLL(x1,x11)(y9,y19)\tkzGetPoint{z19}
\tkzInterLL(x2,x12)(y9,y19)\tkzGetPoint{z29}
\tkzInterLL(x3,x13)(y9,y19)\tkzGetPoint{z39}
\tkzInterLL(x4,x14)(y9,y19)\tkzGetPoint{z49}
\tkzInterLL(x5,x15)(y9,y19)\tkzGetPoint{z59}
\tkzInterLL(x6,x16)(y9,y19)\tkzGetPoint{z69}
\tkzInterLL(x7,x17)(y9,y19)\tkzGetPoint{z79}

\tkzInterLL(x9,x19)(y1,y11)\tkzGetPoint{z91}
\tkzInterLL(x9,x19)(y2,y12)\tkzGetPoint{z92}
\tkzInterLL(x9,x19)(y3,y13)\tkzGetPoint{z93}
\tkzInterLL(x9,x19)(y4,y14)\tkzGetPoint{z94}
\tkzInterLL(x9,x19)(y5,y15)\tkzGetPoint{z95}
\tkzInterLL(x9,x19)(y6,y16)\tkzGetPoint{z96}
\tkzInterLL(x9,x19)(y7,y17)\tkzGetPoint{z97}

\tkzDrawPolygon[fill=mysand](z59,z55,z45,z44,y4,y9)
\tkzDrawPolygon[fill=myblue!80!black](z59,z56,z66,z67,z77,z78,z88,z89)
\tkzDrawPolygon[fill=myblue!70](y4,z34,z33,z23,z22,z12,z11,y1)

\tkzDrawLine[add=0 and 0.1,-Stealth](A, x9)
\tkzDrawLine[add=0 and 0.1,-Stealth](A, y9)

\tkzDrawSegment[line width=0.5pt](y1, z91)
\tkzDrawSegment[line width=0.5pt](y2, z92)
\tkzDrawSegment[line width=0.5pt](y3, z93)
\tkzDrawSegment(y4, z94)
\tkzDrawSegment(y5, z95)
\tkzDrawSegment[line width=0.5pt](y6, z96)
\tkzDrawSegment[line width=0.5pt](y7, z97)
\tkzDrawSegment[line width=0.5pt](y8, z98)
\tkzDrawSegment(y9, z99)

\tkzDrawSegment[line width=0.5pt](x1, z19)
\tkzDrawSegment[line width=0.5pt](x2, z29)
\tkzDrawSegment[line width=0.5pt](x3, z39)
\tkzDrawSegment(x4, z49)
\tkzDrawSegment(x5, z59)
\tkzDrawSegment[line width=0.5pt](x6, z69)
\tkzDrawSegment[line width=0.5pt](x7, z79)
\tkzDrawSegment[line width=0.5pt](x8, z89)
\tkzDrawSegment(x9, z99)

\tkzDrawLine[add=-0.2 and -0.2](x1,y1)
\tkzDrawLine[add=-0.2 and -0.2](A,z11)

\tkzDrawLine[add=-0.2 and -0.2](z12,z21)
\tkzDrawLine[add=-0.2 and -0.2](z11,z22)

\tkzDrawLine[add=-0.2 and -0.2](z23,z32)
\tkzDrawLine[add=-0.2 and -0.2](z22,z33)

\tkzDrawLine[add=-0.2 and -0.2](z34,z43)
\tkzDrawLine[add=-0.2 and -0.2](z33,z44)

\tkzDrawLine[add=-0.2 and -0.2](z45,z54)
\tkzDrawLine[add=-0.2 and -0.2](z44,z55)

\tkzDrawLine[add=-0.2 and -0.2](z56,z65)
\tkzDrawLine[add=-0.2 and -0.2](z55,z66)

\tkzDrawLine[add=-0.2 and -0.2](z67,z76)
\tkzDrawLine[add=-0.2 and -0.2](z66,z77)

\tkzDrawLine[add=-0.2 and -0.2](z78,z87)
\tkzDrawLine[add=-0.2 and -0.2](z77,z88)

\tkzDrawLine[add=-0.2 and -0.2](z89,z98)
\tkzDrawLine[add=-0.2 and -0.2](z88,z99)

\tkzLabelLine[pos=.97,right=-1.0mm](A,D){$j$}
\tkzLabelLine[pos=0.85,left](A,D){$2l\!+\!1$}
\tkzLabelLine[pos=0.55,left](A,D){$l\!+\!2$}
\tkzLabelLine[pos=0.45,left](A,D){$l\!+\!1$}
\tkzLabelLine[pos=0.35,left](A,D){$l$}
\tkzLabelLine[pos=0.05,left](A,D){$1$}

\tkzLabelLine[pos=.96,above](A,B){$i$}
\tkzLabelLine[below=-2mm](x5,x6){$l\!+\!2$}
\tkzLabelLine[below=-2mm](x4,x5){$l\!+\!1$}
\tkzLabelLine[below=0mm](x3,x4){$l$}
\tkzLabelLine[pos=0.05,below](A,B){$1$}
\tkzLabelLine[pos=0.95,below=-3mm](A,x9){$2l\!+\!1$}

\tkzLabelPoint[above left](z27){\Huge$\tilde{\textbf{G}}$}
\tkzLabelPoint[shift={(-6.3mm,5.8mm)}](z78){\Huge$\tilde{\textbf{E}}$}
\tkzLabelPoint[shift={(-6.3mm,5.8mm)}](z23){\Huge$\tilde{\textbf{F}}$}

\end{tikzpicture}
}
\caption{Odd case. ${\tilde{\text{E}}}+\tilde{\text{G}}+\tilde{\text{F}}\leqslant{\textstyle\frac{{l}({l}-1)}{2}}\,{+}\,{\textstyle\frac{{l}({l}+3)}{2}}\,{+}\,{\textstyle\frac{{l}({l}-1)}{2}}$}
\label{fig:odd:3}
\end{figure}

Next, using (\ref{Lemma.4.odd''}), we get  (see Figure~\ref{fig:odd:3})
\begin{align*}
|{E}^{<}|\!\ =\ &|\{\la{i},{j}\ra\in{E}^{<},\:{i}>{l}\!+\!1\}|\ +\\
+\ &|\{\la{i},{j}\ra\in{E}^{<},\:{i}\leq{l}\!+\!1,\:{j}>{l}\}|\ +\\
+\ &|\{\la{i},{j}\ra\in{E}^{<},\:{i}\leq{l}\!+\!1,\:{j}\leq{l}\}|\ \leq\\
\leq\ &{\textstyle\frac{{l}({l}-1)}{2}}\ + \ {\textstyle\frac{{l}({l}+3)}{2}}\ +\ {\textstyle\frac{{l}({l}-1)}{2}}\ =\ 
{\textstyle\frac{{l}(3{l}+1)}{2}}.
\end{align*}
It follows that
$$
\frac{|{E}^{<}|}{|{E}|}\ \leq\  \frac{\frac{{l}(3{l}+1)}{2}}{\frac{2{l}(2{l}+1)}{2}}\ =\ 
\frac{3{l}+1}{4{l}+2} 
\ <\  \frac{3}{4}.
$$
\end{proof}

\begin{prop}\label{prop.grater}
$\mathsf{EMN}(\mathsf{sCop})\geq\frac{3}{4}$.
\end{prop}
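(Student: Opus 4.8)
The plan is to reduce the statement to a purely combinatorial extremal problem about out-degrees, and then to exhibit a family of tournaments that makes the inequalities in Proposition~\ref{prop.less} asymptotically tight.

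For the reduction I would first show that the minimum defining $\mathsf{EMN}(\mathsf{sCop})$ is attained at the ranking $r=\d$ itself. Indeed $\d$ is strict Copeland fair, and for it an arc $\la x,y\ra$ is backward exactly when $\d(x)<\d(y)$; conversely, \emph{every} strict Copeland fair ranking is forced to make each such arc backward, since $\d(x)<\d(y)$ implies $r(x)<r(y)$. Hence, writing $U(T)$ for the number of ``upset'' arcs $\la x,y\ra$ with $\d(x)<\d(y)$, we get
\[
\min_{r\in\mathsf{sCop}(T)}\bw(r,T)=\frac{U(T)}{|\mathsf E(T)|}\qquad\text{for every }T,
\]
so it suffices to construct tournaments with $U(T)/|\mathsf E(T)|\to 3/4$.

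Next I would design the family by reverse-engineering the proof of Proposition~\ref{prop.less}. In the even case that proof bounds $|{E}^{<}|$ by the three quantities $\tfrac{l(l-1)}2$, $\tfrac{l^2}2$, $\tfrac{l(l-1)}2$, coming from the two ``same-half'' blocks and the ``cross'' block, and uses $\overleftarrow{\mathsf E}(r,T)\subseteq {E}^{<}$. To approach $3/4$ I need all three bounds to become asymptotic equalities \emph{and} almost all of the counted arcs to be genuine upsets (i.e.\ to join vertexes of different out-degree), so that $U$ is close to $|{E}^{<}|$. Concretely, labelling the vertexes $1,\dots,2l$ in order of out-degree, I would arrange that inside each half nearly every arc runs from the lower-numbered to the higher-numbered vertex and that exactly half of the $l^2$ cross arcs run from the bottom half to the top half; then $U\approx\tfrac{l(l-1)}2+\tfrac{l^2}2+\tfrac{l(l-1)}2=\tfrac{l(3l-2)}2$ while $|\mathsf E(T)|=\binom{2l}2$, giving $U/|\mathsf E(T)|\to 3/4$, with the analogous odd-case count producing $\tfrac{3l+1}{4l+2}$.

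The hard part, where the whole argument lives, is realizability: the requirement that the labelling actually be the out-degree order couples the blocks. Forcing the within-half arcs to run ``upward'' tends to \emph{reverse} the out-degrees inside a half, which would collapse that half into a single out-degree class and turn precisely those arcs into non-upsets (arcs inside an out-degree class are never backward). I would resolve this not by exact equality but by a near-regular profile: spread the out-degrees over many classes, each of size $o(n)$ (say with a sub-linear spread, so the number of classes tends to infinity while every class is a vanishing fraction of $V$). Then the within-class arcs — the only ones lying in ${E}^{<}$ but not contributing to $U$ — form a negligible fraction of $\mathsf E(T)$, so $|{E}^{<}|/|\mathsf E(T)|$ and $U/|\mathsf E(T)|$ share the same limit. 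The main obstacle is therefore to produce, for each $l$, an honest tournament whose out-degree sequence is simultaneously spread enough (so classes stay small) and balanced enough (so the prescribed ``each vertex beats about half of the higher-ranked vertexes'' orientation is realizable); I would do this by writing the score sequence down explicitly, verifying it via a realizability criterion (Landau/Fulkerson) together with a direct construction of the arcs, and then checking the two asymptotic counts that yield $3/4$.
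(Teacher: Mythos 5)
Your opening reduction is correct and is a genuinely useful reformulation: since $\d$ itself is strict Copeland fair, and since every ${r}\in\mathsf{sCop}({T})$ must make backward every arc $\la{x},{y}\ra$ with $\d({x})<\d({y})$, the minimum of $\bw({r},{T})$ over $\mathsf{sCop}({T})$ is exactly $U({T})/|\mathsf{E}({T})|$, where $U({T})$ counts the arcs running from a vertex of smaller out-degree to one of larger out-degree; so $\mathsf{EMN}(\mathsf{sCop})=\sup_{T}U({T})/|\mathsf{E}({T})|$. (The paper only uses the inequality direction of this, implicitly, inside its construction.) Your structural desiderata are also the right ones: in the paper's extremal examples the equal-out-degree classes carry a negligible fraction of all arcs, the between-class arcs inside each half of the degree order essentially all run upward, and the cross arcs split about in half.

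The genuine gap is that the proof stops exactly where the content of the proposition begins: no tournament family with $U({T})/|\mathsf{E}({T})|\to 3/4$ is ever exhibited. You correctly name the obstruction (forcing within-half arcs to run upward fights the requirement that the labelling be the out-degree order), but then defer its resolution to ``writing the score sequence down explicitly'' and checking it via Landau/Fulkerson. That plan does not close the gap: Landau's criterion realizes a score sequence, but it gives no control over \emph{which} arcs are oriented upward relative to the degree order, and that orientation pattern is precisely what must be engineered. The paper resolves this with a fully explicit two-coordinate construction that avoids realizability theorems altogether. Take the rotational tournament $\ST_{l}$ on $\{1,\ldots,2l+1\}$, where ${i}\to{j}$ iff ${j}\in\{{i}\oplus 1,\ldots,{i}\oplus{l}\}$, and build $\mathsf{T}_{l}$ on all pairs $\la{m},{i}\ra$ with ${m},{i}\in\{1,\ldots,2l+1\}$: arcs inside a column (${i}={j}$) go from larger to smaller ${m}$ (a transitive tie-breaker), arcs inside a row (${m}={n}$) follow $\ST_{l}$ on the second coordinate, and all remaining arcs follow $\ST_{l}$ on the first coordinate. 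Then $\d(\la{m},{i}\ra,\mathsf{T}_{l})=({m}-1)+{l}+2{l}^2$ depends only on ${m}$ and is strictly increasing in it, so every strict Copeland fair ranking must order the vertexes by ${m}$; consequently all cross arcs with ${m}<{n}$, ${i}\neq{j}$, $\la{m},{n}\ra\in\SE_{l}$ are forced backward, and there are ${l}^2(2{l}+1)(3{l}+1)$ of them against $|\mathsf{E}_{l}|=2{l}({l}+1)(2{l}+1)^2$ arcs in total, giving a ratio that tends to $3/4$. In short: your reduction and your target structure match the paper, but the missing construction \emph{is} the theorem, and the route you sketch toward it would itself require a new idea.
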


As a corollary, we have

\begin{theo}\label{teor.EN(Cop,sCop)=3/4}
$\mathsf{EMN}(\mathsf{Cop})=\mathsf{EMN}(\mathsf{sCop})=\frac{3}{4}$. \qed
\end{theo}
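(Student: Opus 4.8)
My plan is to deduce the Theorem from the two one‑sided bounds. The inequality $\mathsf{EMN}(\mathsf{sCop})\le 3/4$ and $\mathsf{EMN}(\mathsf{Cop})\le 3/4$ is already Corollary~\ref{prop.grater}. Since $\mathsf{Cop}(T)\subseteq\mathsf{sCop}(T)$ for every $T$, a minimum over the smaller class is at least the minimum over the larger one, so $\mathsf{EMN}(\mathsf{Cop})\ge\mathsf{EMN}(\mathsf{sCop})$. Hence everything reduces to the single lower bound $\mathsf{EMN}(\mathsf{sCop})\ge 3/4$ of Proposition~\ref{prop.grater}: once that is in hand, $3/4\le\mathsf{EMN}(\mathsf{sCop})\le\mathsf{EMN}(\mathsf{Cop})\le 3/4$ forces both numbers to equal $3/4$. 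So the real task is: for each $\varepsilon>0$ produce $T\in\tur$ with $\min_{r\in\mathsf{sCop}(T)}\bw(r,T)\ge 3/4-\varepsilon$.

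The first step I would isolate is a forced‑backward‑arc principle. If $r$ is strict Copeland fair and $x\,{\to}\,y$ with $\d(x)<\d(y)$, then $r(x)<r(y)$, so $\la x,y\ra$ is backward. Therefore $|\overleftarrow{\mathsf{E}}(r,T)|\ge U(T)$ for every $r\in\mathsf{sCop}(T)$, where $U(T)\coloneq|\{\la x,y\ra\in\mathsf{E}(T):\d(x)<\d(y)\}|$ counts the \emph{upsets}; equality is attained by the ranking $r\coloneq\d$, which is constant on each out‑degree class, so in fact $\min_{r\in\mathsf{sCop}(T)}\bw(r,T)=U(T)/|\mathsf{E}(T)|$. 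This reduces Proposition~\ref{prop.grater} to constructing tournaments whose proportion of upsets tends to $3/4$.

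For the construction I would use layered nearly regular tournaments. Fix integers $h,m$, take $n=2hm$ vertices split into $2h$ equal levels $V_1,\dots,V_{2h}$, place a regular tournament inside each level, and between levels $V_a,V_b$ with $a<b$ send an exact fraction $\beta_{ab}\in[0,1]$ of the $m^2$ arcs upward, in a bipartite‑regular way. Then every vertex of $V_a$ has the same out‑degree $\tfrac{m-1}{2}+mS_a$, where $S_a=\sum_{b>a}\beta_{ab}+\sum_{b<a}(1-\beta_{ba})$, and the upsets are exactly the upward cross arcs, $U(T)=m^2\sum_{a<b}\beta_{ab}$ --- provided the level degrees strictly increase, i.e. $S_1<\dots<S_{2h}$. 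I am thus led to the linear program of maximising $\sum_{a<b}\beta_{ab}$ subject to $S_1\le\dots\le S_{2h}$, whose optimum I would compute to be $M_{2h}=\tfrac{h(3h-2)}{2}$ (dually, a transportation argument: from the all‑ones orientation one must cancel a flow of total mass $h^2/2$ routed across the middle in order to flatten the degrees).

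Taking $m\to\infty$ with the optimal $\beta$ perturbed so that $S_1<\dots<S_{2h}$ holds strictly, with gaps of order $1/\sqrt m$ (hence integer degree gaps of order $\sqrt m$, so the levels stay genuinely separated and the upward arcs stay genuine upsets), costs only $o(m^2)$ arcs and gives $U(T)/|\mathsf{E}(T)|\to 2M_{2h}/(2h)^2=(3h-2)/(4h)$; letting $h\to\infty$ yields $\sup_{T}U(T)/|\mathsf{E}(T)|\ge\sup_h (3h-2)/(4h)=3/4$. The step I expect to be the main obstacle is precisely this optimisation. There is a genuine tension: an upset is a win of a low‑degree vertex over a high‑degree one, but winning is exactly what raises out‑degree, so maximising upsets fights the requirement that we rank by out‑degree --- indeed two levels stall at ratio $1/4$, and a fully upward (rock--paper--scissors) orientation collapses to a regular tournament with \emph{no} genuine upsets. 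The content is that the best compromise is a nearly regular tournament in which about $3/4$ of the arcs still run from a marginally smaller to a marginally larger degree, and the three points needing care are: the LP value $M_{2h}=h(3h-2)/2$ (equivalently, that flattening the $S_a$ is optimal), the realisability of the chosen cross‑orientations as honest integer tournaments (capacities in $[0,1]$, bipartite‑regularity, integrality as $m\to\infty$), and that the perturbation preserves strict monotonicity of the level degrees.
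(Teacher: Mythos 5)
Your proposal is correct, and on the only part that carries real weight --- the lower bound $\mathsf{EMN}(\mathsf{sCop})\geq 3/4$ --- it takes a genuinely different route from the paper, so a comparison is in order. Your opening reduction is clean and worth keeping: under any strict Copeland fair ranking every \emph{upset} arc $x\,{\to}\,y$ with $\d(x)<\d(y)$ is backward, and the ranking $r\coloneq\d$ is itself strict Copeland fair with exactly the upsets as backward arcs, so $\min_{r\in\mathsf{sCop}(T)}\bw(r,T)$ equals the proportion of upsets; the paper never isolates this two-sided fact, it only uses the forced-backward direction inside its construction. For the extremal family, the paper exhibits an explicit tournament $\mathsf{T}_l$ on $(2l+1)^2$ vertices: $2l+1$ blocks of size $2l+1$, transitive tournaments down the columns (which make the out-degree $(m-1)+l+2l^2$ depend only on the block index $m$ and increase with gap exactly $1$), and the circulant $\ST_l$ along rows and between blocks, so that all between-block arcs with $m<n$, $i\neq j$ following the circulant are forced backward; a direct count then gives the ratio $\to 3/4$. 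You instead derive the family from an optimization principle, and your asserted LP value is correct: writing $L=\{1,\dots,h\}$, $H=\{h+1,\dots,2h\}$, monotonicity of the $S_a$ gives $\sum_{a\in L}S_a\leq\sum_{b\in H}S_b$, which caps the cross mass at $h^2/2$ and hence $\sum_{a<b}\beta_{ab}\leq\binom{h}{2}+\binom{h}{2}+\frac{h^2}{2}=\frac{h(3h-2)}{2}$ --- literally the same half-split count as the even case of Proposition~\ref{prop.less}. This is what your approach buys: it makes visible that the paper's upper and lower bounds meet at $3/4$ because they are two sides of one linear program. The price is the degenerate optimum: since $\sum_{a\in L}S_a\leq hS_h\leq hS_{h+1}\leq\sum_{b\in H}S_b$, equality forces all $S_a$ equal, i.e.\ a regular tournament with no upsets at all, so a perturbation is unavoidable in your route. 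Your $1/\sqrt{m}$-gap perturbation does work --- it costs $O(h^2m^{3/2})=o(m^2)$ upward arcs while leaving integer degree gaps of order $\sqrt{m}$, ample room for rounding the $\beta_{ab}$ to multiples of $1/m$ and for circulant bipartite realizations (with $m$ odd for the regular levels) --- but this bookkeeping is exactly what the paper avoids by using transitive columns as a built-in tie-breaker. In short: the paper's proof is shorter and fully explicit; yours explains where $3/4$ comes from, at the cost of an LP-optimality argument, a perturbation step, and integrality details that your sketch correctly flags but still has to write out in full.
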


\begin{proof}[Proof of Proposition~\ref{prop.grater}.] 
For a natural ${l}$, consider the following tournament $\ST_{l}=\la\SV_{l},\SE_{l}\ra$, where
\[\SV_{l}=\{1,\ldots,2{l}+1\}\quad\text{and}\]
\[\SE_{l}=\big\{\la{i},{j}\ra\in\SV_{l}{\times}\SV_{l}:{j}\in\{{i}\oplus1,\ldots,{i}\oplus{l}\}\big\},\]
where ${\oplus}$ is the addition in the ring $\mathbb{Z}/(2{l}{+}1)\mathbb{Z}=\{1,\ldots,2{l}+1\}$ of positive integers modulo $2{l}+1$.

For the sake of readability, denote the ordered pair $\la{m},{i}\ra$ by ${m}\!{\medvert}\!{i}$. 
For a natural ${l}$, consider the following tournament $\mathsf{T}_{l}=\la\mathsf{V}_{l},\mathsf{E}_{l}\ra$, where
\[
\mathsf{V}_{l}=\big\{{m}\!{\medvert}\!{i}:{m},{i}\in\{1,\ldots,2{l}{+}1\}\big\}\quad\text{and}
\]
\begin{align*}
\mathsf{E}_{l}\:=\ &\big\{\la{m}\!{\medvert}\!{i},{n}\!{\medvert}\!{j}\ra\in\mathsf{V}_{l}^2:
{m}>{n},\:{i}={j}\big\}\ \cup\\
\cup\ &\big\{\la{m}\!{\medvert}\!{i},{n}\!{\medvert}\!{j}\ra\in\mathsf{V}_{l}^2:
{m}={n},\:\la{i},{j}\ra\in\SE_{l}\big\}\ \cup\\
\cup\ &\big\{\la{m}\!{\medvert}\!{i},{n}\!{\medvert}\!{j}\ra\in\mathsf{V}_{l}^2:{m}\neq{n},\:{i}\neq{j},\:\la{m},{n}\ra\in\SE_{l}\big\}.
\end{align*}

For every natural ${l}$ and ${m},{i}\in\{1,\ldots,2{l}{+}1\}$, we have
\begin{align*}
({m}\!{\medvert}\!{i})^{+}_{\mathsf{T}_{l}}\ =
\ &\big\{\la{m}\!{\medvert}\!{i},{n}\!{\medvert}\!{i}\ra\in\mathsf{V}_{l}^2:
{m}>{n}\big\}\ \cup\\
\cup\ &\big\{\la{m}\!{\medvert}\!{i},{m}\!{\medvert}\!{j}\ra\in\mathsf{V}_{l}^2:
\la{i},{j}\ra\in\SE_{l}\big\}\ \cup\\
\cup\ &\big\{\la{m}\!{\medvert}\!{i},{n}\!{\medvert}\!{j}\ra\in\mathsf{V}_{l}^2:{m}\neq{n},\:{i}\neq{j},\:\la{m},{n}\ra\in\SE_{l}\big\},
\end{align*}
so that
\begin{align*}
\d({m}\!{\medvert}\!{i},\mathsf{T}_{l})\ =\ &({m}-1)\ +\ {l}\ +\  
\sum_{{j}\neq{i}}\big|\big\{\la{m}\!{\medvert}\!{i},{n}\!{\medvert}\!{j}\ra\in\mathsf{V}_{l}^2:{m}\neq{n},
\:\la{m},{n}\ra\in\SE_{l}\big\}\big|
\ =\\
=\ &({m}-1)\ +\ {l}\ +\ (2{l})\cdot{l}.
\end{align*}
Therefore, for every strict Copeland fair ranking ${r}$ of $\mathsf{T}_{l}$, we have
\[
{m}<{n}\quad\text{implies}\quad{r}({m}\!{\medvert}\!{i})<{r}({n}\!{\medvert}\!{j})\quad\text{for all }{m}\!{\medvert}\!{i},{n}\!{\medvert}\!{j}\in\mathsf{V}_{l}.
\]

It follows that
\[
\big\{\la{m}\!{\medvert}\!{i},{n}\!{\medvert}\!{j}\ra\in\mathsf{V}_{l}^2:{m}<{n},\:{i}\neq{j},\:\la{m},{n}\ra\in\SE_{l}\big\}
\ \subseteq\ \overleftarrow{\mathsf{E}}({r},\mathsf{T}_{l}),
\]
therefore
\[
\begin{split}
|\overleftarrow{\mathsf{E}}({r},\mathsf{T}_{l})|\ &\geq\ 
\big|\big\{\la{m}\!{\medvert}\!{i},{n}\!{\medvert}\!{j}\ra\in\mathsf{V}_{l}^2:{m}<{n},\:{i}\neq{j},\:\la{m},{n}\ra\in\SE_{l}\big\}\big|
\ =\\ 
&=\ \sum_{{i}=1}^{2{l}+1}\big|\big\{\la{m}\!{\medvert}\!{i},{n}\!{\medvert}\!{j}\ra\in\mathsf{V}_{l}^2:{m}<{n},\:{i}\neq{j},\:\la{m},{n}\ra\in\SE_{l}\big\}\big|\ =\\ 
&=\ (2{l}+1)\big|\big\{\la{m}\!{\medvert}\!{1},{n}\!{\medvert}\!{j}\ra\in\mathsf{V}_{l}^2:{m}<{n},\:{1}\neq{j},\:\la{m},{n}\ra\in\SE_{l}\big\}\big|
\ =\\ 
&=\ \sum_{{j}=2}^{2{l}+1}(2{l}+1)\big|\big\{\la{m}\!{\medvert}\!{1},{n}\!{\medvert}\!{j}\ra\in\mathsf{V}_{l}^2:{m}<{n},\:\la{m},{n}\ra\in\SE_{l}\big\}\big|
\ =\\ 
&=\ 2{l}(2{l}+1)\big|\big\{\la{m}\!{\medvert}\!{1},{n}\!{\medvert}\!{2}\ra\in\mathsf{V}_{l}^2:{m}<{n},\:\la{m},{n}\ra\in\SE_{l}\big\}\big|
\ =\\ 
&=\ 2{l}(2{l}+1)\sum_{{m}=1}^{2{l}+1}\big|\big\{\la{m}\!{\medvert}\!{1},{n}\!{\medvert}\!{2}\ra\in\mathsf{V}_{l}^2:{m}<{n},\:{n}\in\{{m}\oplus1,\ldots,{m}\oplus{l}\}\big\}\big|
\ =\\ 
&=\ 2{l}(2{l}+1)\big(\underbrace{{l}+\ldots+{l}}_{{l}+1}\,+\,({l}{-}1)+\ldots+0\big)
\ =\ {l}^2(2{l}{+}1)(3{l}{+}1).
\end{split}
\]
Since 
\[
|\mathsf{E}_{l}|=\frac{((2{l}{+}1)^2-1)(2{l}{+}1)^2}{2}=2{l}({l}{+}1)(2{l}{+}1)^2,
\]
we have
\[
\min\limits_{{r}\in\mathsf{sCop}(\mathsf{T}_{l})}
\frac{|\overleftarrow{\mathsf{E}}({r},\mathsf{T}_{l})|}{|\mathsf{E}_{l}|}\geq
\frac{{l}^2(2{l}{+}1)(3{l}{+}1)}{2{l}({l}{+}1)(2{l}{+}1)^2}.
\]
It follows that 
\[
\mathsf{EMN}(\mathsf{sCop})=\sup\limits_{{T}\in\mathbb{T}}
\min\limits_{{r}\in\mathsf{sCop}({T})}\frac{|\overleftarrow{\mathsf{E}}({r},{T})|}{|\mathsf{E}({T})|}
\geq\lim\limits_{{l}\to\infty}\frac{{l}^2(2{l}{+}1)(3{l}{+}1)}{2{l}({l}{+}1)(2{l}{+}1)^2}=\frac{3}{4}.
\]
\end{proof}


\printbibliography

\end{document}